
\documentclass[preprint,10pt]{amsart}%
\usepackage[margin=1.0in]{geometry}
\usepackage{amsmath}
\usepackage{amssymb}
\usepackage{amsfonts}
\usepackage{upref,amsthm,amsxtra,exscale}
\usepackage{cite}
\usepackage[colorlinks=true,urlcolor=blue,
citecolor=red,linkcolor=blue,linktocpage,pdfpagelabels,
bookmarksnumbered,bookmarksopen]{hyperref}
\usepackage{epsfig,graphics,color}
\usepackage{graphicx}%
\setcounter{MaxMatrixCols}{30}
\newtheorem{theorem}{Theorem}[section]
\theoremstyle{plain}

\newtheorem{lemma}[theorem]{Lemma}

\newtheorem{Remark}{Remark}

\numberwithin{equation}{section}

\begin{document}
\title[An Abstract Linking Theorem Applied to Indefinite Problems ]{An Abstract Linking Theorem Applied to Indefinite Problems Via Spectral Properties}
\author{Liliane A. Maia}
\address{Department of Mathematics, UNB, 70910-900 Brasilia, Brazil.}
\email{lilimaia@unb.br}
\author{ Mayra Soares}
\address{Department of Mathematics, UNB, 70910-900 Brasilia, Brazil.}
\email{ssc\_mayra@hotmail.com}
\thanks{Research supported by FAPDF 0193.001300/2016, CNPq/PQ 308378/2017-2 (Brazil)}
\date{\today}

\begin{abstract}
An abstract linking result for Cerami sequences is proved without the Cerami condition. It is applied directly in order to prove the existence of critical points for a class of indefinite problems in infinite dimensional Hilbert Spaces. The applications are given to Schr\"odinger equations. Here spectral properties inherited by the potential features are exploited in order to establish a linking structure, hence hypotheses of monotonicity on the nonlinearities are discarded.
\end{abstract}
\maketitle

\section{Introduction}
\label{sec:introduction}

\qquad In this work the groundbreaking paper \cite{BR} by V. Benci and P.H. Rabinowitz is revisited.
The aim is to prove an abstract linking theorem for Cerami sequences \cite{C}, which will complement related works found in the literature and make possible to extend for many applications.
Our interest in applications are twofold, on one hand, extending results for existence of solutions to nonlinear Schr\"odinger Equations, Elliptic Systems or even Hamiltonian Systems, with  very general potentials which make the problems strongly indefinite. On the other hand working with nonlinear terms which do not satisfy any monotonicity condition such as those required to perform projections on the so called Nehari Manifold as in \cite{Pa,SW}, for instance. 
In this purpose spectral properties of self-adjoint operators are going to be exploited, in order to get the geometry of a linking structure and then apply an abstract result to obtain a critical point to the functional associated to the nonlinear equation, namely a solution to the problem. Furthermore, a compactness structure given by Cerami sequences, $(C)_c$ sequence for short, is faced here, since asymptotically linear problems at infinity are studied. Thus, inspired by the theory developed in \cite{BR}, in this work a more general version of their main result, Theorem 1.29, is provided for $(C)_c$ sequences. To do so, a Deformation Lemma adapted for Cerami sequences is proved and then the abstract results obtained by V. Benci and P. Rabinowitz are extended.

\qquad Our main result, developed throughout this paper is the following:\\

\hspace{-0.4cm}\textbf{Linking Theorem for Cerami Sequences:} Let $E$ be a real Hilbert space, with inner product $\big( \cdot, \cdot \big)$, $E_1$ a closed subspace of $E$ and $E_2=E_1^{\perp}$. Let $I \in C^1(E,\mathbb{R})$ satisfying:\\

\hspace{-0.5cm}$(I_1) \ \ I(u)=\dfrac{1}{2}\big(Lu,u\big)+B(u),$ for all $u \in E$, where $u=u_1 +u_2 \in E_1 \oplus E_2$, $Lu=L_1u_1 + L_2u_2$ and\  $L_i: E_i \rightarrow E_i, \ i=1,2$ is a bounded linear self adjoint mapping.

\hspace{-0.5cm}$(I_2) \ \ B$ is weakly continuous and uniformly differentiable on bounded subsets of $E$.

\hspace{-0.5cm}$(I_3) \ $ There exist Hilbert manifolds $S,Q \subset E$, such that $Q$ is bounded and has boundary $\partial Q$, constants $\alpha > \omega$ and $v \in E_2$ such that\\
$(i) \ S\subset v + E_1$ and $I \geq \alpha$ on $S$;
$(ii) \ I\leq \omega$ on $\partial Q$;
$(iii) \ S$ and $\partial Q$ link.

\hspace{-0.5cm}$(I_4) \  \ $
If for a sequence $(u_n)$, $I(u_n)$ is bounded and  $\left(1+||u_n||\right)||I'(u_n)|| \to 0$, as $n\to +\infty$, then $(u_n)$ is bounded.\\

\hspace{-0.5cm}Then I possesses a critical value $c\geq \alpha$.\\

\qquad It is important to highlight $(I_2)$ implies that $B'(u)$ maps weakly convergent to strongly convergent sequences, which gives a kind of partial compactness for $I$. Moreover, $(I_4)$ is a weakened version of Cerami condition, so denoted $(C)_c$ condition, once the boundedness of any $(C)_c$ sequence will be enough to look for a nontrivial critical point, without wondering whether it has a convergent subsequence (see the first paragraph of the proof of Theorem \ref{ALT}).
Together, hypotheses $(I_2)$ and $(I_4)$ ensure the existence of a critical value $c$ which can be characterized as a minimax level. Furthermore, hypotheses $(I_1)$ and $(I_3)$ produce a quite general linking geometry for functional $I$, so that both subspaces in the Hilbert's decomposition are allowed to be infinite dimensional. The conjunction of these hypotheses reproduces sufficient tools to obtain a nontrivial critical point for $I$ in the desired applications scenario.
In fact, our approach of constructing a linking structure, by means of a sharp study on the spectral properties of the Schr\"odinger
operator is a methodological novelty. This idea extremely enhances the current references since it
lights up the fundamental relation between the asymptotic behavior of the nonlinear term and the spectrum features. Therefore, the purpose of revisiting the core of linking structures was precisely to understand this interaction and
discard any unnecessary hypothesis.

\qquad The pioneering work in this direction is \cite{BBF} by P. Bartolo, V. Benci and D. Fortunato, where a Deformation Lemma for Cerami sequences was developed
assuming $(C)_c$ condition, as a qualitative deformation lemma, with the purpose of extending previous critical point results to non super-quadratic problems. Thereafter, D. Costa and C. Magalhães  in  \cite{CM} proved abstract linking results for strongly indefinite non-quadratic problems on bounded domains, making use of the Deformation Lemma introduced in \cite{BBF} and proving that under their assumptions the associated functional satisfied $(C)_c$ condition. Alternatively, as aforementioned, the same lines as \cite{BR} are followed, hence a deformation lemma without $(C)_c$ condition is proved, and furthermore, our version of linking theorem only requires the boundedness of Cerami sequences.

\qquad In the literature one also finds a paper by G. Li and C. Wang \cite{LW}, which presented a similar argument, introducing a new kind of deformation lemma, without $(C)_c$ condition, but subsequently used in a linking theorem under $(C)_c$ condition. Moreover, in the abstract result they required that one of the subspaces in the linking decomposition being finite dimension, while in our result both subspaces in the decomposition may be of infinite dimension. Their construction was inspired by ideas of M. Willem \cite{W}, for the quantitative deformation lemma. 
Although a kind of deformation lemma is developed, it is deeply different from theirs, since nonstandard ideas in \cite{BR} are closely followed. 
In fact, the mapping $\eta$ in deformation is in general determined by solving an appropriate  initial value problem involving $I'(\eta)$. However, this is not suitable for our purposes, because is necessary to construct an $\eta$ satisfying special properties, which will be fundamental in attaining the critical minimax level. 

\qquad It is also important to mention the theory developed by W. Kryszewski and A. Szulkin in \cite{KS}, where they solved a more general class of superlinear problems, with assumptions of periodicity. Developing a new degree theory and a weaker topology, they generalized abstract linking theorems introduced in \cite{BR} also working with Palais-Smale sequences. Following the same idea, in \cite{LS} G. Li and A. Szulkin extended the results in \cite{KS} obtaining a $(C)_c$ sequence for the asymptotically linear case. Nevertheless, so as to get a non-trivial solution, without $(C)_c$ condition, these authors required
extra assumptions, including a monotonicity condition on the nonlinearity in an auxiliary problem solved in \cite{SzZo}, 
which had been treated by adapting techniques in \cite{KS} and \cite{J}. 

\qquad Posteriorly, T. Bartsch and Y. Ding \cite{BD} complemented the results in \cite{KS} considering both, Palais-Smale and Cerami sequences, in order to apply their abstract results to a Dirac equation where the nonlinearity could be asymptotically linear or superlinear at infinity. Similarly, in \cite{DR}, Y. Ding and B. Ruf worked with an asymptotically linear problem with a Dirac operator, but without periodicity conditions. Their operator satisfies that the essential spectrum is $\mathbb{R} \setminus (-a,a)$ and that the discrete spectrum intersects the interval $(0,q_0)$, for some positive $q_0$. Then they could make use of discrete and positive eigenvalues in the linking structure and apply a particular case of the result in \cite{BD}, so as to obtain a Cerami sequence. Under their assumptions they were able to prove that their functional satisfied $(C)_c$ condition, which yielded their results. 

\qquad It's worth to highlight that \cite{DR} adapted assumptions and arguments introduced in another very inspiring paper \cite{DJ}, where L. Jeanjean and Y. Ding worked with Hamiltonian Systems, looking for homoclinic orbits, without any periodicity condition. These authors also applied the abstract critical point theory developed in \cite{BD}, and in order to recover the desired compactness they imposed hypotheses controlling the size of the nonlinearity with respect to the behavior of the potential at infinity. Thus, their assumptions yielded the  linking geometry, and provided $(C)_c$ condition. In contrast to the argument presented by these authors, our approach does not require the guarantee of $(C)_c$ condition, the necessary compactness to solve the problem is embedded inside the four conditions assumed in the abstract result.

\qquad Still referring to abstract results involving linking structure, it is as well known that M. Schechter and W. Zou have developed many relevant papers in this spirit, see especially \cite{Sc,ScB,ScZ2,ScZ1} among other works by these authors. In our understanding, their results are away from ours in the sense that, roughly speaking, they usually work with weaker linking geometries in order to get either a Palais-Smale or a Cerami bounded sequence. Then they apply widely alternative arguments to find a solution to the proposed application. On the other side, our idea is to obtain a result which could ensure the existence of a nontrivial critical point directly, without stressing either on geometry or on compactness of the associated functional, separately. Notwithstanding, it's worth pointing out clever abstract results obtained in \cite{ScZ2,ScZ1} (cf. Theorem 2.1 in both), where the authors also made use of ``Monotonicity Trick" developed by L. Jeanjean in \cite{J}, for the purpose of getting critical points for a family of functionals, converging to a critical point the functional associated to the initial problem. These results have been applied to solve asymptotically linear problems with spectral properties similar to those presented in this paper, see \cite{CZ}, for instance.

\qquad Here we present two applications in Schr\"odinger equations for our abstract result. Other applications can be found in \cite{So}, where M. Soares proves the existence of solution to Hamiltonian and Elliptic systems by applying this abstract result. 

\qquad For our applications we consider problem $(P)$
\begin{equation}\label{P}
-\Delta u + V(x)u = g(x,u) \text{ \ in \ } \mathbb{R}^N, \ \ N\geq3, 
\end{equation}
in the case where $g(x,s) = h(x)f(s),$ and $h$ satisfies\\

\hspace{-0.5cm}$(h_1) \ h \in L^{\infty}(\mathbb{R}^N)\cap L^{q}(\mathbb{R}^N), \ q = \frac{2^*}{2^*-p}$,  for some $p \in (2,2^*)$ and $h >0$ almost everywhere.\\

\qquad Furthermore, $f$ is asymptotically linear satisfying\\

\hspace{-0.5cm}$(f_1) \ \ f \in C(\mathbb{R},\mathbb{R})$ and $\displaystyle\lim_{s \to 0}\dfrac{f(s)}{s}=0;$

\hspace{-0.5cm}$(f_2)$ \ There exists $a>0$ such that $\displaystyle\lim_{s \to +\infty}\dfrac{F(s)}{s^2}=\dfrac{a}{2},$ where $F(s):= \displaystyle\int_0^sf(t)dt,$ and $F(s)\geq0$.

\hspace{-0.5cm}$(f_3)$ \ Setting $Q(s):=\frac{1}{2}f(s)s - F(s)>0$ for all $s \in \mathbb{R}/\{0\}$,
\[
\displaystyle\lim_{s\to +\infty} Q(s) = +\infty.
\]

\hspace{-0.5cm}Moreover, for the first application, we assume that $V$ satisfies:\\

\hspace{-0.5cm}$(V_1) \ \ V \in C(\mathbb{R}^N,\mathbb{R})$ and $\displaystyle\lim_{|x|\to +\infty}V(x)=V_{\infty}>0;$

\hspace{-0.5cm}$(V_2)$ \ Setting $A := -\Delta + V(x)$, as an operator of $L^2(\mathbb{R}^N)$, and denoting by $\sigma(A)$ the spectrum of $A$,
\[
\sup\left[\sigma(A)\cap(-\infty,0)\right]= \sigma^{-} < 0 < \sigma^{+} = \inf\left[\sigma(A)\cap(0,+\infty)\right].
\]
\medskip

\qquad This application was inspired by L. Maia, J. Oliveira Junior and R. Ruviaro \cite{MOR}, where they solved problem $(P)$ with potential $V$ satisfying $(V_1)-(V_2)$. In addition, they required that $0 \notin \sigma(A)$ and assumed some more hypotheses of decay and compactness \ on $V$. \ About the \ nonlinearity \ they set \ $h\equiv 1$, \ $f \in C^3(\mathbb{R}^N, \mathbb{R})$, with some growth hypotheses on its derivatives, and assumed ${f(s)}/{s}$ being increasing as well.
Since this kind of potential ensures that the subspace where $A$ is negative definite is finite dimensional, they could apply the aforementioned version of Linking Theorem introduced by G. Li and C. Wang in \cite{LW} to get $(C)_c$ sequence.
They used the associated problem at infinity and a Splitting Lemma to compare the levels of both problems and get the necessary compactness. Trying to improve their result, our abstract linking theorem for $(C)_c$ sequences is applied and a nontrivial critical point is obtained straightway, avoiding such monotonicity assumptions on $f$. 

\qquad Staring at our hypotheses, it is also possible to say that our second application complements the work by L. Jeanjean and K. Tanaka in \cite{JT}. In fact, they assumed $V(x) \geq \alpha >0$, and so they worked with Ekeland's principle to get a $(C)_c$ sequence and due to the geometry of their functional, they applied the Mountain Pass Theorem to get a critical point. They also worked with an asymptotically linear problem where ${f(s)}/{s}$ is not necessarily increasing. In addition, they assumed $h\equiv 1$ and
${f(s)}/{s} \to a >\inf \sigma(A)> 0$ as $|s| \to + \infty$. Differently, in our case $V$ changes sign and $\inf \sigma(A)<0$, which implies a linking geometry and prevents us to use the same argument. However, considering the positive spectrum a similar hypothesis is assumed:
\[
a>\inf_{u_1\in E_1, u\not=0}\dfrac{\displaystyle\int_{\mathbb{R}^N}\left(|\nabla u_1(x)|^2+ V(x)u^2_1(x)\right)\, dx }{\displaystyle\int_{\mathbb{R}^N}h(x)u^2_1(x)\ dx}\geq \dfrac{1}{h_\infty}\inf[\sigma(A)\cap(0,+\infty)] = \dfrac{\sigma^+}{h_\infty}>0,
\]
where $||h||_{L^\infty(\mathbb{R}^N)} := h_\infty$ and $E_1$ is the subspace of $H^1(\mathbb{R}^N)$ on which operator $A$ is positive definite. This hypothesis allows to develop a linking structure.

\qquad On the other hand, the work \cite{CT} by D. Costa and H. Tehrani can be cited, since the same $V$ as theirs is presented here. More specifically, they required $(V_1)$, and our hypothesis $(V_2)$ implies that either $0$ is an isolated point of $\sigma(A)$, or it is in a gap of the spectrum, which is also required by them. However, they did not work with an asymptotically linear problem, but they assumed Ambrosetti-Rabinowitz well known condition, and required a nondecreasing nonlinearity. In their assumptions, $h=a$ is a sign-changing function in  $C^1(\mathbb{R}^N,\mathbb{R})$ and such that $\displaystyle\lim_{|x|\to +\infty}a(x)= a_{\infty}<0$, differently from $\displaystyle\lim_{|x|\to +\infty}h(x)= 0$, in our case. Moreover, instead of using an abstract linking theorem, they applied a method of approximations to solve their problem.

\qquad It is also worth to mention the paper \cite{ES} by A. Edelson and C. Stuart, since  assumptions close to theirs are considered here, however they asked ${f(s)}/{s}$ strictly increasing, which is removed here. Moreover, they applied the method of sub and super-solution and bifurcation to get a solution to their problem.

\qquad Finally, for the second application we keep all assumptions on $h$ and $f$, but on $V$ we assume $(V_2)$ and replace assumption $(V_1)$ by the following:\\

\hspace{-0.5cm}$(V_1') \ V \in C(\mathbb{R}^N,\mathbb{R})$ is $(2\pi)^N$-periodic in $x \in \mathbb{R}^N$.\\

\qquad This application is motivated by the fact that in virtue of $(V_1)$, the subspace in which operator $A$ is negative definite, is finite dimensional. Since this is irrelevant for applying our abstract result, we sought for a problem where both subspaces, in which operator $A$ is positive and negative  definite, are infinite dimensional. In fact, $(V_1')$ combined with $(V_2)$ ensure the desired. Although all other hypotheses are kept, this replacement changes completely the spectral properties of $A$, which are fundamental to determine the linking geometry.
 It would be interesting to note that, we only require $V$ being a periodic function in order to explore spectral properties, we do not need a periodic nonlinearity. Hence unlike most of the works in the literature (see \cite{ KS, BD}), we do not make use of periodicity to translate a $(C)_c$ sequence and ensure the existence of a critical point.

\section{The Notion of Linking and Some Definitions}

\qquad In this section the notion of link is presented and a new version of an abstract linking theorem is proved based on \cite{BR}, however with $(C)_c$ sequences. Throughout this section $E$ always denotes a Hilbert space, $E=E_1\oplus E_2$ and if $u\in E,$ write $u= u_1+u_2$ with $u_i \in E_i, \ i=1,2$, then set $P_iu:=u_i$, where $P_i:E\to E_i$ is the projector on $E_i, \ i=1,2$. Furthermore, mappings $h:[0,1]\times E\to E$ will be denoted by $h_t(u),$ and the closed ball of $E$ centered in zero with radius $r$, will be denoted by $B_r$. Furthermore, let $\mathcal{B}_{\tau}= (B_{\tau}\cap E_1)\oplus(B_{\tau}\cap E_2)$.

	Let	$\Sigma$ \  denote \ the \ class \ of \ mappings $\Phi \in C([0,1]\times E, E)$, for which $P_2\Phi_t(u)=u_2 - W_t(u)$, with $W_t$ compact for $t\in [0,1]$ and $\Phi_0(u)=u$. Let $S$ and $Q$ be Hilbert manifolds, $Q$ having a boundary, $\partial Q$, $S$ and $\partial Q$ ``link'' if whenever $\Phi \in \Sigma$ and $\Phi_t(\partial Q)\cap S= \emptyset$, for all $t\in [0,1]$, then $\Phi_t(Q)\cap S \not=\emptyset$, for all $t \in [0,1]$.
	

\begin{Remark}\label{s1r1}
	A geometric understanding of this definition is that $S$ and $\partial Q$ link if every Hilbert manifold modeled on $Q$ and sharing the same boundary intersects $S$ (see \cite{BR}). 
\end{Remark}

\qquad An useful example of linking sets, is provided in \cite{BR} and stated below. 

\begin{lemma}\label{LE}
	(See \cite{BR} Lemma 1.3) \ Let \ $e\in \partial B_1\cap E_1$ \ and $r_1>\rho>0$. If $S= \partial B_{\rho}\cap E_1$ \ and $Q= \{re:r\in[0,r_1]\}\oplus(B_{r_2}\cap E_2)$, then $S$ and $Q$ link.	
\end{lemma}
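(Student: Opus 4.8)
The plan is to establish the linking property via a Leray--Schauder degree argument carried out in the subspace $V := \mathbb{R}e \oplus E_2$, observing that by construction $Q \subset V$ and that $Q$ is a bounded manifold with boundary $\partial Q$ relative to $V$. I would fix $\Phi \in \Sigma$ with $\Phi_t(\partial Q) \cap S = \emptyset$ for all $t \in [0,1]$, and seek for each $t$ a point $u \in Q$ with $\Phi_t(u) \in S$. Since $S = \partial B_\rho \cap E_1 \subset E_1$, a point $\Phi_t(u)$ lies in $S$ precisely when $P_2\Phi_t(u) = 0$ and $\|P_1\Phi_t(u)\| = \rho$. I would therefore encode both conditions into a single map $\Psi_t : Q \to V$ defined by
\[
\Psi_t(u) = \big(\|P_1\Phi_t(u)\| - \rho\big)\, e + P_2\Phi_t(u),
\]
so that $\Psi_t(u) = 0$ is equivalent to $\Phi_t(u) \in S$. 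In particular the standing hypothesis $\Phi_t(\partial Q)\cap S = \emptyset$ translates into $0 \notin \Psi_t(\partial Q)$ for every $t$.

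Next I would verify that $\Psi_t$ is admissible for the Leray--Schauder degree, that is, a compact perturbation of the identity on $V$. Writing $u = re + w$ with $r \in [0,r_1]$ and $w = u_2 \in B_{r_2}\cap E_2$, and using the defining property $P_2\Phi_t(u) = u_2 - W_t(u)$ of the class $\Sigma$, a direct computation yields $\Psi_t = \mathrm{id}_V - K_t$ with
\[
K_t(u) = \big(r - \|P_1\Phi_t(u)\| + \rho\big)\, e + W_t(u).
\]
Here $W_t$ is compact by hypothesis, while the remaining term takes values in the one-dimensional space $\mathbb{R}e$ and, since $Q$ is bounded and $\Phi_t$ is continuous, its scalar coefficient stays bounded, so that term has relatively compact range. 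Hence $K_t$ is compact and depends continuously on $(t,u)$, and the degree $d(t) := \deg(\Psi_t, \mathrm{int}\, Q, 0)$ (interior relative to $V$) is well defined for each $t$ because $0 \notin \Psi_t(\partial Q)$.

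Finally I would invoke homotopy invariance: since $0 \notin \Psi_t(\partial Q)$ for all $t$ and $\{\Psi_t\}$ is an admissible homotopy of compact perturbations of the identity, $d(t)$ is independent of $t$, so it suffices to compute $d(0)$. Because $\Phi_0 = \mathrm{id}$ one gets $W_0 \equiv 0$ and $P_1\Phi_0(u) = re$ with $\|re\| = r$ (as $\|e\|=1$ and $r\ge 0$), whence $\Psi_0(u) = (r-\rho)e + w$, i.e.\ $\Psi_0 = \mathrm{id}_V - \rho e$ is a translation of $V$. Its unique zero $r=\rho,\ w=0$ lies in $\mathrm{int}\, Q$ precisely because $0 < \rho < r_1$ and $0 \in B_{r_2}$, and the degree of a translation over a region containing its zero equals $1$; thus $d(0) = 1$. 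Consequently $d(t) \neq 0$ for all $t$, forcing $\Psi_t$ to vanish somewhere in $\mathrm{int}\, Q$, i.e.\ $\Phi_t(Q)\cap S \neq \emptyset$ for every $t$, which is exactly the assertion that $S$ and $\partial Q$ link.

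The point requiring most care is the degree setup itself: confirming that the norm term $\|P_1\Phi_t(u)\|$ does not destroy compactness (it is rescued by taking values in the finite-dimensional factor $\mathbb{R}e$), correctly identifying the interior and boundary of $Q$ relative to $V$ rather than to $E$, and checking that the linking hypothesis is equivalent to the nonvanishing $0 \notin \Psi_t(\partial Q)$ needed for both well-definedness and homotopy invariance. Once these are in place, the computation of $d(0)$ and the conclusion are routine.
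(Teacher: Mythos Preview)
The paper does not supply its own proof of this lemma; it merely quotes the statement and refers the reader to \cite{BR}, Lemma~1.3. Your argument is correct and is precisely the Leray--Schauder degree approach that Benci and Rabinowitz use in the cited reference: reduce the intersection condition $\Phi_t(u)\in S$ to a zero of a map $\Psi_t=\mathrm{id}_V-K_t$ on the subspace $V=\mathbb{R}e\oplus E_2$, check admissibility (compactness of $K_t$, nonvanishing on $\partial Q$), and compute the degree at $t=0$ to be $1$ by translation. The only point you might tighten is the compactness needed for homotopy invariance: one should note that $K:[0,1]\times\overline{Q}\to V$ is compact as a map of two variables (not merely that each $K_t$ is compact), which follows since $W(t,u)=u_2-P_2\Phi_t(u)$ is continuous on the product and $[0,1]$ is compact, combined with the compactness of each $W_t$; this is routine but worth a sentence. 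Otherwise the proposal matches the original source both in strategy and in detail.
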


\qquad First of all, some definitions and notations introduced in \cite{BR} are required.\\

	Let $B:E\to \mathbb{R}$ be a functional. $B$ is said to be uniformly differentiable in bounded subsets of $E$, if for any $R, \varepsilon>0$, there exists  $\delta=\delta(R,\varepsilon)>0$, independent of $u$, such that
	\[
	|B(u+v)-B(u) - B'(u)v|\leq\varepsilon||v||,
	\]
	for all $u, \ u+v \in B_R$ and $||v||\leq \delta$.

\begin{lemma}\label{B'}
	Let $B:E\to \mathbb{R}$ be a functional which is weakly continuous and uniformly differentiable in bounded subsets of E. Then $B':E\to E'$ is completely continuous.
\end{lemma}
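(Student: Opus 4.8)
The plan is to show that $B'$ sends weakly convergent sequences to norm-convergent ones, which is precisely the meaning of complete continuity. So I would begin with an arbitrary sequence $u_n\rightharpoonup u$ in $E$ and aim to prove $\|B'(u_n)-B'(u)\|_{E'}\to 0$. Since weakly convergent sequences are bounded, I can fix $M$ with $\|u_n\|,\|u\|\le M$ for all $n$ and set $R:=M+1$; this $R$ is the radius on which I invoke the uniform differentiability, after which I am free to select the accuracy $\varepsilon$.

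The heart of the argument is a difference-quotient comparison. For a unit vector $w$ and a small scalar $t\in(0,\delta]$, where $\delta=\delta(R,\varepsilon)\le 1$ is furnished by the uniform differentiability hypothesis, I would apply the defining inequality at both base points $u_n$ and $u$ with the common increment $v=tw$ and subtract, obtaining
\[
t\,\big|(B'(u_n)-B'(u))w\big|\le \big|B(u_n+tw)-B(u+tw)\big|+\big|B(u_n)-B(u)\big|+2\varepsilon t.
\]
Dividing by $t$ isolates the operator acting on $w$, at the cost of the two $B$-difference terms, which must be driven to zero. Here weak continuity of $B$ enters: $u_n\rightharpoonup u$ forces $B(u_n)\to B(u)$, and, for fixed $w$, $u_n+tw\rightharpoonup u+tw$ forces $B(u_n+tw)\to B(u+tw)$.

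The main obstacle is that the $E'$-norm is a supremum over all unit vectors $w$, whereas the weak continuity just used is only pointwise in $w$ and a priori not uniform on the unit sphere, so one cannot pass the limit under the supremum directly. To circumvent this I would argue by contradiction: if $\|B'(u_n)-B'(u)\|_{E'}\not\to 0$, then along a subsequence there exist unit vectors $w_n$ with $|(B'(u_n)-B'(u))w_n|\ge\varepsilon_0$ for some $\varepsilon_0>0$. Because $E$ is a Hilbert space, hence reflexive, the bounded sequence $(w_n)$ has a weakly convergent subsequence $w_n\rightharpoonup w$. I then run the displayed estimate with $w=w_n$ and a single fixed $t$: since $u_n+tw_n\rightharpoonup u+tw$ and $u+tw_n\rightharpoonup u+tw$, weak continuity of $B$ gives $B(u_n+tw_n)\to B(u+tw)$ and $B(u+tw_n)\to B(u+tw)$, so the first difference term tends to $0$, and the second does as well. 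Passing to the limit leaves $\limsup_n|(B'(u_n)-B'(u))w_n|\le 2\varepsilon$.

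Choosing $\varepsilon<\varepsilon_0/2$ at the outset then contradicts $|(B'(u_n)-B'(u))w_n|\ge\varepsilon_0$, which completes the proof. The one point to handle with care is the bookkeeping of constants: $R$ must be fixed before $\varepsilon$, so that $\delta$, and hence the admissible $t$, depend only on already-fixed data, and the normalization $\delta\le 1$ guarantees that all evaluation points $u_n+tw_n$ and $u+tw_n$ remain inside $B_R$, where the uniform estimate is valid.
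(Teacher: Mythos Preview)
Your proof is correct. The paper itself omits the argument entirely, stating only that ``the proof follows from elementary arguments,'' so there is no paper proof to compare against; your contradiction scheme---extracting a weakly convergent subsequence of the offending unit directions $w_n$ and exploiting weak continuity of $B$ along $u_n+tw_n$ and $u+tw_n$---is a clean realization of those elementary arguments, and your bookkeeping (fixing $R$ first, then $\varepsilon<\varepsilon_0/2$, then $\delta$ and $t$) is in the right order.
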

The proof follows from elementary arguments.\\

	Let $\Gamma$ denote the set of mappings $h \in C([0,1]\times E, E)$ satisfying:
	
	\hspace{-0.5cm}$(\Gamma_1)$ \ $h_t(u)= U_t(u)+K_t(u)$, where $U, K \in C([0,1]\times E, E), U_t$ is a homeomorphism of $E$ onto $E$ and $K_t$ is compact for each $t \in [0,1]$;\\
	\hspace{-0.5cm}$(\Gamma_2)$ \ $U_0(u)=u, K_0(u)=0$;\\
	\hspace{-0.5cm}$(\Gamma_3)$ \ $P_iU_t(u)= U_t(P_i(u)), \ i = 1,2;$\\
	\hspace{-0.5cm}$(\Gamma_4)$ \ $h_t$ maps bounded sets to bounded sets.\\
	\hspace{-0.5cm}In addition, for $h \in \Gamma$, let $h^j_t(u)$ denote the $j$-fold composite of $h$ with itself, i. e. $h^1_t(u)=h_t(u),$   ${h^2_t(u)=h_t(h_t(u))}$, and $h_t^j(u)=h_t(h_t^{j-1}(u))$, for $j>1$.

\qquad Now, it is convenient to state the Linking Theorem for Cerami sequences, of the Introduction.

\begin{theorem}\label{ALT}
	\textbf{(Abstract Linking Theorem)} Let $E$ be a real Hilbert space, with inner product $\big( \cdot, \cdot \big)$, $E_1$ a closed subspace of $E$ and $E_2=E_1^{\perp}$. Let $I \in C^1(E,\mathbb{R})$ satisfying:\\
	
	\hspace{-0.5cm}$(I_1) \ \ I(u)=\dfrac{1}{2}\big(Lu,u\big)+B(u),$ for all $u \in E$, where $u=u_1 +u_2 \in E_1 \oplus E_2$, $Lu=L_1u_1 + L_2u_2$ and\  $L_i: E_i \rightarrow E_i, \ i=1,2$ is a bounded linear self adjoint mapping.\\
	
	\hspace{-0.5cm}$(I_2) \ \ B$ is weakly continuous and uniformly differentiable on bounded subsets of $E$.\\
	
	\hspace{-0.5cm}$(I_3) \ $ There exist Hilbert manifolds $S,Q \subset E$, such that $Q$ is bounded and has boundary $\partial Q$, constants $\alpha > \omega$ and $v \in E_2$ such that\\
	$(i) \ S\subset v + E_1$ and $I \geq \alpha$ on $S$;\\
	$(ii) \ I\leq \omega$ on $\partial Q$;\\
	$(iii) \ S$ and $\partial Q$ link, that is satisfy de linking definition in \cite{BR}. \\
	
	\hspace{-0.5cm}$(I_4) \  \ $Setting $c$ by
	\begin{equation}\label{e9}
	c:= \displaystyle\inf_{h\in \Lambda}\sup_{u\in \bar{Q}}I(h_1(u)),
	\end{equation}
	where $\overline{Q}$ is the closure of $Q$,
	\[
	\Lambda = \left\{ 
	h \in C([0,1]\times E,E) : \begin{array}{ll} h= h^{(1)}\circ\cdot\cdot\cdot\circ h^{(m)},\ \ h^{(1)},..., \ h^{(m)} \in \Gamma, \ m\in\mathbb{N}\\
	h_t(\partial Q) \subset I^{\frac{\alpha - \omega}{2} - \beta}, \ \ \beta \in \left(0, \frac{\alpha-\omega}{2}\right)
	\end{array}
	\right\},
	\]
	and \ $I^{\lambda} \ = \ \{u \ \in E : I(u)\ \leq \ \lambda\}$, \ for \ all \ $\lambda \ \in \ \mathbb{R}$. \ If for a sequence $(u_n)$, there \ exists \ a \ constant \ $b\ >\ 0$ \ such \ that\ ${(u_n) \subset I^{-1}([c-b,c+b])}$ and $\left(1+||u_n||\right)||I'(u_n)|| \to 0$ as $n\to+\infty$, then $(u_n)$ is bounded.
	\vspace{0.5cm}\\
	\hspace{-0.5cm}Then $c\geq \alpha$, and $c$ is a critical value of $I$.\\
\end{theorem}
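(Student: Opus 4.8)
My plan is to run the classical minimax scheme in three stages: first deduce $c\ge\alpha$ from the linking geometry alone, then produce a Cerami sequence at level $c$ by a deformation argument, and finally extract from it a critical point at level $c$ using the compactness built into $(I_2)$ and $(I_4)$.

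First I would prove $c\ge\alpha$. Fix any $h\in\Lambda$; by definition $h=h^{(1)}\circ\cdots\circ h^{(m)}$ with each $h^{(j)}\in\Gamma$ and $h_t(\partial Q)\subset I^{\frac{\alpha-\omega}{2}-\beta}$ for some admissible $\beta$. Because the level $\frac{\alpha-\omega}{2}-\beta$ is strictly below $\alpha$, while $I\ge\alpha$ on $S$ by $(I_3)(i)$, the set $h_t(\partial Q)$ stays inside a sublevel set disjoint from $S$, so $h_t(\partial Q)\cap S=\emptyset$ for every $t\in[0,1]$. The structural conditions $(\Gamma_1)$, $(\Gamma_2)$, $(\Gamma_3)$ are precisely what is needed to guarantee that each $h_t$ is admissible for the linking definition, i.e. lies in the class $\Sigma$ with $P_2h_t(u)=u_2-W_t(u)$, $W_t$ compact (checking $\Lambda\subset\Sigma$ is one of the technical points). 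Hence $(I_3)(iii)$ applies and $h_1(Q)\cap S\neq\emptyset$: there is $u^\ast\in\overline{Q}$ with $h_1(u^\ast)\in S$, so $I(h_1(u^\ast))\ge\alpha$ and thus $\sup_{u\in\overline{Q}}I(h_1(u))\ge\alpha$. Taking the infimum over $\Lambda$ in \eqref{e9} yields $c\ge\alpha$.

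Next I would show that $c$ is attained as a Cerami value. Arguing by contradiction, suppose no $(C)_c$ sequence exists; then there are $\bar b,\bar\delta>0$ with $(1+\|u\|)\|I'(u)\|\ge\bar\delta$ on $I^{-1}([c-\bar b,c+\bar b])$. The Deformation Lemma adapted to Cerami sequences then provides a map $\eta\in\Gamma$ which pushes $I^{c+\varepsilon}$ into $I^{c-\varepsilon}$ for small $\varepsilon$ while keeping $\partial Q$ inside $I^{\frac{\alpha-\omega}{2}-\beta}$. Choosing, by definition of $c$, some $h\in\Lambda$ with $\sup_{\overline{Q}}I(h_1(\cdot))<c+\varepsilon$ and composing it with $\eta$ produces a new admissible map $\widetilde h\in\Lambda$ with $\sup_{\overline{Q}}I(\widetilde h_1(\cdot))<c$, contradicting \eqref{e9}. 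Hence a sequence $(u_n)$ with $I(u_n)\to c$ and $(1+\|u_n\|)\|I'(u_n)\|\to0$ exists. I expect this step to be the main obstacle: following \cite{BR}, the deformation cannot be taken as a naive gradient flow of $I'$, but must be constructed so as to simultaneously respect the $\Gamma$-structure, control $\partial Q$, and lower the relevant sublevel set.

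Finally I would extract the critical point. By $(I_4)$ the sequence $(u_n)$ is bounded, so up to a subsequence $u_n\rightharpoonup u$, and boundedness gives $\|I'(u_n)\|\to0$. Writing $I'(u_n)=Lu_n+B'(u_n)$ and invoking Lemma \ref{B'} (complete continuity of $B'$), we have $B'(u_n)\to B'(u)$ strongly, while $Lu_n\rightharpoonup Lu$ since $L$ is bounded linear; comparing weak limits gives $Lu=-B'(u)$, that is $I'(u)=0$, so $u$ is a critical point. To identify the level, the weak continuity of $B$ in $(I_2)$ gives $B(u_n)\to B(u)$, and from $\langle I'(u_n),u_n\rangle\to0$ together with $\langle B'(u_n),u_n\rangle\to\langle B'(u),u\rangle$ one obtains $(Lu_n,u_n)\to-\langle B'(u),u\rangle=(Lu,u)$; combining these shows $I(u)=\lim_n I(u_n)=c$. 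Therefore $u$ is a critical point with $I(u)=c\ge\alpha$, which proves that $c$ is a critical value and completes the argument. Notably, this last computation recovers the level $c$ exactly from the weak limit, so no strong convergence of $(u_n)$ is needed.
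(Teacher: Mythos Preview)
Your plan follows the same three-ingredient scheme as the paper: the linking inequality $c\ge\alpha$ (Lemma~\ref{prop}), the deformation lemma adapted to Cerami sequences (Lemma~\ref{DL}), and the extraction of a critical point via complete continuity of $B'$. The organization differs only cosmetically---the paper runs a single contradiction (assume $c$ is not critical, derive the quantitative bound \eqref{e10} using exactly your Stage~3 compactness argument, then deform), while you first produce a $(C)_c$ sequence and then separately extract the critical point---but the two arrangements are logically equivalent and use identical estimates. Your Stage~3 computation recovering $I(u)=c$ is correct and matches the paper's.

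One technical point in Stage~1 is misstated, however: the inclusion $\Lambda\subset\Sigma$ is \emph{false}. For $h\in\Gamma$ one has $P_2h_t(u)=U_t(u_2)+P_2K_t(u)$, which is not of the form $u_2-W_t(u)$ unless the homeomorphism $U_t$ is the identity; for compositions $h=h^{(1)}\circ\cdots\circ h^{(m)}$ the situation is worse. The paper's Lemma~\ref{prop} does not put $h$ into $\Sigma$ directly; instead it shows, by induction on the depth $m$, that the equation $P_2h_t(u)=v$ is equivalent to $u_2=P_2Y_t(u)$ for some compact $Y_t$ with $Y_0(u)=v$, and then builds an auxiliary map $\Phi_t(u)=P_1h_t(u)+u_2-P_2Y_t(u)+v$ which \emph{does} lie in $\Sigma$ and satisfies $\Phi_t(u)\in S\Leftrightarrow h_t(u)\in S$. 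It is to this $\Phi$ that the linking hypothesis $(I_3)(iii)$ is applied. So the idea you flag as ``one of the technical points'' is not a routine verification but a genuine construction you would need to supply.
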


	Let $I \in C^1(E,\mathbb{R})$ be a functional. A sequence $(u_n)\subset E$ is said to be a Cerami Sequence or a $(C)$ sequence for short, if it satisfies:
	\[
	 sup_n|I(u_n)|< \infty, \text{ \ and \ } ||I'(u_n)||(1+||u_n||)\to 0,
	 \]
	as $n \to +\infty.$
	Given $c\in \mathbb{R}$, $(u_n) \subset E$ is said to be a Cerami sequence on level $c$ or a $(C)_c$ sequence for short, if it satisfies:
	\[
	 I(u_n)\to c, \text{ \ and \ } ||I'(u_n)||(1+||u_n||)\to 0,
	 \]
	as $n \to +\infty.$

\section{A Quantitative Deformation Lemma and Proof of the Main Result}

\qquad Inspired by \cite{BR} it is suitable to state a variant of standard Quantitative Deformation Lemma for Cerami sequences, without Cerami condition, which is necessary to prove Theorem \ref{ALT}.

\begin{lemma}\label{DL}
	\textbf{(Deformation Lemma):} \ Let $I \in C^1(E,\mathbb{R})$ satisfying $(I_1)-(I_2)$ as in Theorem \ref{ALT}. \ Then for any $R \in \mathbb{N}, \varrho >0$ and $\varepsilon \in \left(0,\frac{1}{10}\right)$, if $s:= (R+2)^2$, there exist $k \in \mathbb{N}$ and $\eta \in \Gamma$, such that:\\
	$i) \ \ I(\eta^{ks}_t(u)) \leq I(u) + \varrho$, for all $u \in \mathcal{B}_{R+2}$ and $t \in [0,1];$\\
	$ii)  \ $If $c \in \mathbb{R}$ and $||I'(w)||(1+||w||) \geq \sqrt{2\varepsilon},$ for all $w \in \mathcal{B}_{R+1}\cap I^{-1}([c-\varepsilon, c+ \varepsilon])$, then $I(\eta^{ks}(u)) \leq c - \dfrac{\varepsilon}{2},$ whenever $u \in \mathcal{B}_{\frac{R}{2}}\cap I^{-1}([c-\varepsilon,c+\varepsilon]).$
\end{lemma}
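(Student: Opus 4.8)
The plan is to construct the deformation map $\eta$ as a small perturbation of the identity built from a pseudo-gradient-type vector field, iterating it enough times to achieve the desired energy decrease while controlling the energy increase on the whole ball. First I would set up the decomposition $I(u)=\frac12(Lu,u)+B(u)$ and use hypothesis $(I_2)$, which via Lemma \ref{B'} guarantees that $B'$ is completely continuous. This is exactly what makes it possible to write $\eta_t(u)=U_t(u)+K_t(u)$ with $U_t$ a homeomorphism coming from the linear (and hence invertible-up-to-flow) part $L$, and $K_t$ a compact perturbation coming from $B'$; checking that this $\eta$ lies in $\Gamma$ (conditions $(\Gamma_1)$--$(\Gamma_4)$) is the structural heart of the argument. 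I would define $\eta$ essentially by a single ``time step'' of the negative-gradient flow, rescaled so that one application of $\eta_t$ moves points by a controlled amount, and then compose it $ks$ times, the factor $s=(R+2)^2$ being chosen precisely to balance radius growth against the required decrement $\varepsilon/2$.

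Next I would prove part $(i)$, the \emph{global} bound $I(\eta^{ks}_t(u))\le I(u)+\varrho$ on $\mathcal{B}_{R+2}$. The strategy here is that although the flow need not decrease $I$ everywhere (we have no Cerami condition, so there may be points where $I'$ is small yet $I$ is large), each single application of $\eta_t$ can only \emph{increase} $I$ by a quantity controlled by the uniform differentiability estimate in $(I_2)$: using $|B(u+v)-B(u)-B'(u)v|\le\varepsilon\|v\|$ for $\|v\|\le\delta$, together with the self-adjointness of $L$, the net change in $I$ along each step is estimated by a term that is quadratically small in the step size plus the genuine gradient contribution. Summing the (at most $ks$) increments and choosing the step length $\sim \varrho/(ks)$ small enough gives the cumulative bound $\varrho$. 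The key technical point is that the number of iterations $k$ must be fixed \emph{after} one knows how small each step must be, so I would carry $k$ as a free parameter until the very end and pin it down so that both $(i)$ and $(ii)$ hold simultaneously.

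Then for part $(ii)$ I would work on the smaller ball $\mathcal{B}_{R/2}$ under the standing hypothesis that $\|I'(w)\|(1+\|w\|)\ge\sqrt{2\varepsilon}$ throughout the relevant strip $\mathcal{B}_{R+1}\cap I^{-1}([c-\varepsilon,c+\varepsilon])$. Here the point is genuinely quantitative: as long as the flow line starting at $u\in\mathcal{B}_{R/2}$ remains inside this strip, each step strictly decreases $I$ by an amount bounded below by a multiple of $\varepsilon/(ks)$ (the lower gradient bound now works \emph{for} us rather than against us), so after $ks$ steps the total decrease reaches $\varepsilon/2$, as required. One must check two escape possibilities: either the trajectory leaves the energy strip from below (in which case $I$ has already dropped by at least $\varepsilon$, which is even better than $\varepsilon/2$), or it threatens to leave the spatial ball $\mathcal{B}_{R+1}$; the choice $s=(R+2)^2$ and the smallness of the step guarantee that starting from radius $R/2$ one cannot reach radius $R+1$ within $ks$ steps, so the trajectory stays where the gradient estimate is valid.

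I expect the main obstacle to be the simultaneous calibration of the three parameters --- the step length, the iteration count $k$, and the implicit bound $\delta$ from uniform differentiability --- so that the \emph{same} map $\eta$ delivers both the loose global estimate $(i)$ and the sharp decrement $(ii)$. Because we deliberately avoid any compactness (Cerami) assumption, the flow cannot be run for a fixed finite time in the usual way; instead the art is to show that finitely many small explicit steps suffice, which is why $\eta$ is constructed by iteration rather than by solving the autonomous initial value problem $\dot\eta=-I'(\eta)$. Keeping $\eta\in\Gamma$ through the composition (so that the minimax class $\Lambda$ in $(I_4)$ is preserved) while tracking the radius growth is where the bookkeeping is most delicate, and it is precisely the nonstandard, piecewise construction following \cite{BR} that makes this possible.
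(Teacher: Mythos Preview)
Your overall plan follows the paper's construction closely: define $\eta_t(u)=u-\tfrac{t}{k}V(u)$ with $V$ a cut-off version of $I'$, check $\eta\in\Gamma$ via the splitting $L+B'$ (complete continuity of $B'$ from Lemma~\ref{B'}), show $\mathcal{B}_{R+2}$ is $\eta_t$-invariant, and iterate. Your treatment of part $(i)$ and of the case in $(ii)$ where the orbit exits the energy strip $I^{-1}([c-\varepsilon,c+\varepsilon])$ is correct in spirit.

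There is, however, a genuine gap in your handling of spatial escape. You assert that ``the choice $s=(R+2)^2$ and the smallness of the step guarantee that starting from radius $R/2$ one cannot reach radius $R+1$ within $ks$ steps.'' This is false, and no choice of parameters can make it true. Each step has size $\tfrac{1}{k}\|V(\eta_1^{j-1}(u))\|\le\tfrac{M}{k}$, so after $ks$ steps the total displacement can be as large as $Ms=M(R+2)^2$, which is not bounded by $R/2+1$. If you instead shrink the step coefficient to force the orbit to stay in $\mathcal{B}_{R+1}$, the per-step energy decrease (which is proportional to that same coefficient times $\|I'\|^2\ge 2\varepsilon/s$) becomes too small to accumulate to $\varepsilon/2$ in $ks$ steps; the two constraints are incompatible.

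The paper resolves this with a third case that you are missing: suppose the orbit \emph{does} escape $\mathcal{B}_{R+1}$ at some step $m\le ks$. Then the displacement satisfies $\tfrac{R+2}{2}\le\sum_{j=1}^m\|\eta_1^j(u)-\eta_1^{j-1}(u)\|=\tfrac{1}{k}\sum_{j=1}^m\|I'(\eta_1^{j-1}(u))\|$; Cauchy--Schwarz converts this into a lower bound on $\sum\|I'(\eta_1^{j-1}(u))\|^2$, which via the single-step energy inequality forces $I(u)-I(\eta_1^m(u))$ to be large. Squaring and using $s=(R+2)^2$, $m\le ks$ and $\varepsilon<\tfrac{1}{10}$ yields $I(\eta_1^m(u))\le c-\varepsilon$, after which the loose estimate from part $(i)$ finishes the job. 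In short, spatial escape is not prevented---it is exploited, and this is precisely where the mysterious choice $s=(R+2)^2$ together with the bound $\varepsilon<\tfrac{1}{10}$ does its work.
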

\begin{Remark}
	The mapping $\eta$ is usually determined by solving an appropriate differential equation involving $I'(\eta)$. Such an approach seems to fail here  since it does not give an $\eta$ satisfying $(\Gamma_1)-(\Gamma_3)$ which are crucial for the purpose of proving Theorem \ref{ALT}. Hence, in order to prove Lemma \ref{DL} it is necessary to argue similarly to Theorem 1.5 in \cite{BR}.
\end{Remark}
\begin{proof}
	First, choose $\chi \in C^{\infty}(\mathbb{R},\mathbb{R})$ such that $\chi(t)=1$ for $s\leq R+1$, $\chi(t)=0$ for $s\geq R+2$, $\chi'(t)<0$ for $t \in (R+1,R+2)$, and also assume $\chi(t)\leq(R+2-t)^2$ for $t \in[R+\frac{3}{2},R+2]$. For $u=u_1+u_2 \in E_1 \oplus E_2$, set $V_i(u):=\chi(||u_i||)P_iI'(u), \ i = 1,2$ and $V(u):=V_1(u)+V_2(u)$. Note that $V(u)\equiv I'(u)$ in $\mathcal{B}_{R+1}$. Moreover, by $(I_1)-(I_2)$, there is a constant $M=M(R)$ such that $||I'(u)||\leq M$, for $u \in \mathcal{B}_{R+2}$.
	With this, set \begin{equation}\label{e5}
	\bar{\varepsilon} := \dfrac{1}{Ms}\min\left(\varrho,\dfrac{\varepsilon}{2} \right).
	\end{equation}
	Since $I$ is uniformly differentiable on bounded sets, there is a $\delta= \delta(\bar{\varepsilon},R)>0$ such that
	\begin{equation}\label{e2}
	|I(u+v)-I(u)- I'(u)v|\leq \bar{\varepsilon}||v||,
	\end{equation}
	for $u, u+v \in \mathcal{B}_{R+2}$ and $||v||\leq \delta$. Assume $\delta\leq 1$ and choose $k \in \mathbb{N}$ such that
	
	\begin{equation}\label{e1}
	\dfrac{1}{k} < \min\left(\dfrac{\delta}{2M},\dfrac{1}{8\left(R+2\right)\left(1 + \displaystyle\max_{\mathbb{R}}|\chi'(s)|(||L_1||+||L_2||\right)}\right).
	\end{equation}
	Now define $\eta_t(u):= u - \dfrac{t}{k}V(u)$.\\ 
	
	\hspace{-0.4cm}\textbf{Claim.} \textit{$\eta \in \Gamma$.}
	
	\qquad Assuming this Claim and postponing its proof, it is necessary to check that $\mathcal{B}_{R+2}$ is an invariant set for $\eta_t$, for the purpose of proving $(i)-(ii)$. In fact, for $u=u_1+u_2 \in \mathcal{B}_{R+2}$, by the definitions of $\eta$ and $V$ it follows that
	\begin{equation}\label{e110}
	||P_i\eta_t(u)-u_i|| = ||(u_i-\dfrac{t}{k}V_i(u))-u_i|| = \dfrac{t}{k}||V_i(u)||\leq \dfrac{M}{k}\chi(||u_i||).
	\end{equation}
	Note that, for $u_i \in B_{R+\frac{3}{2}}$, the choice of $\chi$ implies that 
	\[
	\frac{M}{k}\chi(||u_i||)\leq \dfrac{1}{2}\leq R+2 - ||u_i||,
	\]
	via (\ref{e1}), while for $||u_i||\geq R+ \dfrac{3}{2}$, it follows that
	\[
	 \dfrac{1}{2} \geq R+2 - ||u_i|| \geq (R+2-||u_i||)^2\geq \dfrac{M}{k}\chi(||u_i||),
	 \]
	by the choice of $\chi$ and $k$. Hence, the right hand side of (\ref{e110}) does not exceed $R+2 - ||u_i||, \ i = 1,2$, which implies $\mathcal{B}_{R+2}$ is invariant for $\eta_t$. Indeed, from (\ref{e110}) and the triangular inequality, it yields
	\begin{equation}\label{ea1}
	||P_i\eta_t(u)|| \leq \dfrac{M}{k}\chi(||u_i||) + ||u_i|| \leq R+2.
	\end{equation}
	Since
	$dist(\eta_t(u),\partial \mathcal{B}_{R+2})$, the distance from $\eta_t(u)$ to $\partial \mathcal{B}_{R+2}$ satisfies 
	\[
	dist(\eta_t(u),\partial \mathcal{B}_{R+2})= \min_{i=1,2}(R+2-||P_i(\eta_t(u))||),
	\]
	thus (\ref{ea1}) implies that $\eta_t(u)\in \mathcal{B}_{R+2}$.
	
	\qquad In order to prove $i)$, observe that by (\ref{e1}) and the definitions of $\eta, \ M$ and $k$, it follows that
	\begin{equation}\label{e22}
	||\eta_t(u)-u||= ||-\dfrac{t}{k}V(u)||=\dfrac{t}{k}||V(u)|| \leq \dfrac{M}{k} <\delta,
	\end{equation}
	for all $u \in \mathcal{B}_{R+2}.$ Hence, fixing $u \in \mathcal{B}_{R+2}$ and using (\ref{e2}) with $v= -\dfrac{t}{k}V(u)$, it yields
	\begin{equation}\label{e3}
	I(u+v)= I(\eta_t(u)) \leq I(u) - \dfrac{t}{k}I'(u)V(u) + \dfrac{\bar{\varepsilon}t}{k}||V(u)||.
	\end{equation}
	Since $E_2=E^{\perp}_1$, then $P_1(I'(u)) \perp P_2(I'(u)),$ and by the definition of $V$, it follows that
	\begin{equation}\label{e4}
	I'(u)V(u) = \chi(||u_1||)||P_1(I'(u))||^2 + \chi(||u_2||)||P_2(I'(u))||^2 \geq 0.
	\end{equation}
	Thus, by (\ref{e5}), (\ref{e22}), (\ref{e3}), (\ref{e4}) and the definition of $\bar{\varepsilon}$, it follows that
	\begin{equation}\label{e6}
	I(\eta_t(u))\leq I(u) + \dfrac{\bar{\varepsilon}M}{k} \leq I(u) + \dfrac{\varrho}{Ms}\dfrac{M}{k} = I(u) + \dfrac{\varrho}{ks}.
	\end{equation}
	Provided that $\mathcal{B}_{R+2}$ is invariant under $\eta_t$, $i)$ holds by iterating (\ref{e6}) $ks$ times. In fact, iterating twice means using $\eta_t(u)$ instead of $u$ in (\ref{e6}), and after that using again (\ref{e6}), but for $u$, it yields
	\[
	I(\eta_t^2(u)) = I(\eta_t(\eta_t(u))) \leq I(\eta_t(u)) + \dfrac{\varrho}{ks} \leq I(u) + \dfrac{2\varrho}{ks}.
	\]
	Then, after $ks$ iterations, it yields
	\[
	I(\eta^{ks}_t(u)) \leq I(u) + \dfrac{ks\varrho}{ks}\leq I(u) + \varrho,
	\]
	and $i)$ is proved.
	
	\qquad In order to prove $ii)$, take $u\in \mathcal{B}_{\frac{R}{2}}\cap I^{-1}([c- \varepsilon, c + \varepsilon])$. Three cases are considered:\\
	
	\hspace{-0.5cm}$\textbf{Case I:} \ \eta_1^j(u) \in \mathcal{B}_{R+1}\cap I^{-1}([c-\varepsilon, c + \varepsilon])$ for $1\leq j \leq ks$. By definition, $V(u)= I'(u)$ in $\mathcal{B}_{R+1}$, then fixing $j, \ 1\leq j \leq ks$,  the definition of $\eta_1$ yields 
	\[
	\eta_1^j(u)-\eta_1^{j-1}(u)= -\dfrac{1}{k}V(\eta_1^{j-1}(u))=-\dfrac{1}{k}I'(\eta_1^{j-1}(u)).
	\]
	Then, using (\ref{e3}) for $\eta^{j}_1(u)$ and $\eta_1^{j-1}(u)$ instead of $u+v$ and $u$, by the definition of $M$, and due to (\ref{e5}), it yields
	\begin{equation}\label{jj-1}
	I(\eta_1^j(u))-I(\eta_1^{j-1}(u))  \leq -\dfrac{1}{k}||I'(\eta_1^{j-1}(u))||^2 + \dfrac{1}{k}\bar{\varepsilon}M \leq -\dfrac{1}{k}||I'(\eta_1^{j-1}(u))||^2 + \dfrac{\varepsilon}{2ks}.
	\end{equation}
	Then, by the telescopic sum, and using (\ref{jj-1}) for all $1\leq j \leq ks$, it follows that
	\begin{equation}\label{ts}
	I(\eta_1^{ks}(u)) - I(u)= \displaystyle\sum_{j=1}^{ks}\left[I(\eta_1^j(u))-I(\eta_1^{j-1}(u))\right] \leq \sum_{j=1}^{ks}\left[-\dfrac{1}{k}||I'(\eta_1^{j-1}(u))||^2+\dfrac{\varepsilon}{2ks}\right].
	\end{equation}
	Provided that $(R+2)||I'(u)||\geq (1+||u||)||I'(u)||\geq \sqrt{2\varepsilon}$ from assumption, setting  $\varepsilon_s := \dfrac{\varepsilon}{s}$, it follows that
	$||I'(u)||^2\geq 2 \varepsilon_s,$
	for all $u \in \mathcal{B}_{R+1}$. Thus, (\ref{ts}) yields
	\begin{equation}\label{e7}
	I(\eta_1^{ks}(u)) - I(u) \leq \displaystyle\sum_{j=1}^{ks}\left[-\dfrac{2\varepsilon_s}{k}+\dfrac{\varepsilon_s}{2k}\right] = - \dfrac{3\varepsilon}{2}.
	\end{equation}
	Therefore, since $I(u)\leq c + \varepsilon$, (\ref{e7}) implies that $I(\eta_1^{ks}(u)) \leq I(u) - \dfrac{3\varepsilon}{2} \leq c - \dfrac{\varepsilon}{2}$,  and the result holds for first case.\\
	
	\hspace{-0.5cm}$\textbf{Case II:} \ \eta_1^j(u) \in \mathcal{B}_{R+1}\cap I^{-1}([c-\varepsilon, c + \varepsilon])$ for $1\leq j \leq m-1 $, but $\eta_1^m(u) \notin I^{-1}([c-\varepsilon,c+\varepsilon]), $ for some $1\leq m \leq ks$. From (\ref{jj-1}) and since $||I'(u)||^2\geq 2 \varepsilon_s,$
	for all $u \in \mathcal{B}_{R+1}$, it follows that 
	\[I(\eta_1^j(u))- I(\eta_1^{j-1}(u)) \leq - \dfrac{2\varepsilon_s}{k}+\dfrac{\varepsilon_s}{2k} = -\frac{3\varepsilon_s}{2k},
	\]
	hence, $I(\eta_1^j(u)) < I(\eta_1^{j-1}(u))$ for $1\leq j \leq m$. Then, $\eta_1^m(u)\notin I^{-1}([c-\varepsilon, c+\varepsilon])$ and $I(\eta_1^m(u)) < I(\eta_1^{m-1}(u))$ implies that $I(\eta_1^m(u)) < c-\varepsilon$. Thus, using again a telescopic sum, it follows that
	\begin{eqnarray}\label{c2ts}
	I(\eta_1^{ks}(u))&=& I(\eta_1^m(u)) + \displaystyle\sum_{j=m +1}^{ks}\left[I(\eta_1^j(u))- I(\eta_1^{j-1}(u))\right] \nonumber\\
	&\leq& c-\varepsilon + \displaystyle\sum_{j=m +1}^{ks}\left[I(\eta_1^j(u))- I(\eta_1^{j-1}(u))\right].
	\end{eqnarray}
	Fixing $j, \ m+1\leq j\leq ks$, and using (\ref{e3}) as in (\ref{jj-1}), it yields
	\begin{equation}\label{jj-12}
	I(\eta_1^j(u))-I(\eta_1^{j-1}(u)) \leq  -\dfrac{1}{k}||I'(\eta_1^{j-1}(u))||^2 + \dfrac{\varepsilon}{2ks} \leq \dfrac{\varepsilon}{2ks}.
	\end{equation}
	Replacing (\ref{jj-12}) in (\ref{c2ts}) for all $m+1\leq j\leq ks$, it follows that
	\begin{eqnarray*}
		I(\eta_1^{ks}(u))&\leq& 
		c-\varepsilon + \displaystyle\sum_{j=m +1}^{ks}\left[\dfrac{\varepsilon}{2ks}\right]\\
		&=& c-\varepsilon + \left(\dfrac{ks - m}{ks}\right)\dfrac{\varepsilon}{2}\\
		&\leq& c -\dfrac{\varepsilon}{2}.
	\end{eqnarray*}
	Therefore, the result also holds in this case.\\
	
	\hspace{-0.5cm}$\textbf{Case III:} \ \eta_1^j(u) \in \mathcal{B}_{R+1}\cap I^{-1}([c-\varepsilon, c + \varepsilon])$ for $1\leq j \leq m-1 $, but $\eta_1^m(u)  \notin \mathcal{B}_{R+1}$ for some $1\leq m \leq ks$. Since $u \in \mathcal{B}_{\frac{R}{2}}$, it follows that
	\[
	||u|| + \dfrac{R}{2} + 1 \leq R + 1 \leq ||\eta_1^m(u)||
	\]
	and hence, by the triangular inequality and the telescopic sum, it follows that
	\begin{eqnarray}\label{e8}
	\dfrac{R+2}{2} &\leq& ||\eta_1^m(u)|| - ||u|| \nonumber\\
	&\leq& ||\eta_1^m(u) - u ||\nonumber\\
	&\leq& \displaystyle\sum_{j=1}^{m}||\eta_1^j(u)- \eta_1^{j-1}(u)||.
	\end{eqnarray}
	By the definition of $\eta_1$, and that $V(u)=I'(u)$ in $\mathcal{B}_{R+1}$, it follows that 
	\begin{equation}\label{3e8}
	||\eta_1^j(u)- \eta_1^{j-1}(u)||= \dfrac{1}{k}||V(\eta_1^{j-1}(u))||= \dfrac{1}{k}||I'(\eta_1^{j-1}(u))||,
	\end{equation}
	for all $1\leq j\leq m$, since $\eta_1^{j}(u)\in \mathcal{B}_{R+1}$ for all $1\leq j\leq m-1$. Hence, replacing (\ref{3e8}) in (\ref{e8}) and applying Holder's Inequality for finite sums, it yields
	\begin{eqnarray}\label{2e8}
	\dfrac{R+2}{2} &\leq& \dfrac{1}{k}\displaystyle\sum_{j=1}^{m}||I'(\eta_1^{j-1}(u))||\nonumber\\
	&\leq& \dfrac{m^{\frac{1}{2}}}{k}\left[\displaystyle\sum_{j=1}^{m}||I'(\eta_1^{j-1}(u))||^2\right]^{\frac{1}{2}}.
	\end{eqnarray}
	From (\ref{jj-1}), it follows that $\dfrac{1}{k}||I'(\eta_1^{j-1}(u))||^2 \leq -[I(\eta_1^j(u))-I(\eta_1^{j-1}(u))] + \dfrac{\varepsilon}{2ks}$, which yields
	\begin{equation}\label{c3}
	||I'(\eta_1^{j-1}(u))||^2 \leq k\left(I(\eta_1^{j-1}(u))-I(\eta_1^{j}(u))\right) + \dfrac{\varepsilon_s}{2},
	\end{equation}
	for all $1\leq j \leq m$. Thus, using (\ref{c3}) in (\ref{2e8}), it yields
	\[
	\dfrac{R+2}{2}\leq \dfrac{m^{\frac{1}{2}}}{k}\left[\displaystyle\sum_{j=1}^{m}k\left(I(\eta_1^{j-1}(u))-I(\eta_1^{j}(u))\right) + \dfrac{\varepsilon_s}{2}\right]^{\frac{1}{2}}= \dfrac{m^{\frac{1}{2}}}{k}\left[k\Big(I(u)-I(\eta_1^{m}(u))\Big) + \dfrac{m\varepsilon_s}{2}\right]^{\frac{1}{2}}.
	\]
	Squaring both sides, it follows that
	\begin{eqnarray}\label{c3e9}
	\dfrac{s}{4} &=& \left(\dfrac{R+2}{2}\right)^2 \nonumber\\
	&\leq&\dfrac{m}{k^2}\left[k\Big(I(u) - I(\eta_1^m(u))\Big) + \dfrac{m\varepsilon_s}{2}\right]\nonumber\\
	&=& \dfrac{m}{k}\left[I(u) - I(\eta_1^m(u)) + \dfrac{m\varepsilon}{2ks}\right]\nonumber\\
	&\leq& \dfrac{m}{k}\left[c + \varepsilon - I(\eta_1^{m}(u)) + \dfrac{\varepsilon}{2}\right].
	\end{eqnarray}
	Multiplying both sides of (\ref{c3e9}) by $\dfrac{k}{m}$, it yields
	\[
	\dfrac{1}{4} \leq \dfrac{ks}{4m} \leq c + \dfrac{3\varepsilon}{2} - I(\eta_1^{m}(u)),
	\]
	which implies $ I(\eta_1^m(u)) \leq c + \dfrac{3\varepsilon}{2} - \dfrac{10\varepsilon}{4} = c - \varepsilon$, since $0 < \varepsilon < \dfrac{1}{10}$ by assumption. Now, as in Case $II$, using a telescopic sum and (\ref{jj-12}), it yields
	\begin{eqnarray*}
		I(\eta_1^{ks}(u))&=& I(\eta_1^m(u)) + \displaystyle\sum_{j=m +1}^{ks}\left[I(\eta_1^j(u))- I(\eta_1^{j-1}(u))\right]\\
		&\leq& c-\varepsilon + \displaystyle\sum_{j=m +1}^{ks}\left[\dfrac{\varepsilon}{2ks}\right]\\
		&=& c-\varepsilon + \left(\dfrac{ks - m}{ks}\right)\dfrac{\varepsilon}{2}\\
		&\leq& c -\dfrac{\varepsilon}{2}.
	\end{eqnarray*}
	Therefore $ii)$ also holds for the third case.
	
	\qquad Finally, to finish the proof of this lemma, it is left to prove the Claim. Denoting by $Id:E\to E$ the identity map in $E$, since $P_iV(u)= V_i(u)=\chi(||u_i||)P_iI'(u)= \chi(||u_i||)(L_iu_i +P_iB'(u))$, it holds that
	\[
	P_i\eta_t(u)= P_i(u-\dfrac{t}{k}V(u)) = \left(Id- \dfrac{t}{k}\chi(||u_i||)L_i\right)u_i - \dfrac{t}{k}\chi(||u_i||)P_iB'(u).
	\]
	Hence, it is suitable to set
	\[
	U_t(u):= \sum_{i=1,2}\left(Id-\dfrac{t}{k}\chi(||u_i||)L_i\right)u_i,
	\]
	and
	\[
	K_t(u):= -\dfrac{t}{k}\sum_{i=1,2}\chi(||u_i||)P_iB'(u).
	\]
	In fact, observe that given $u_n \rightharpoonup u$ in $E$, $(I_2)$  and Lemma \ref{B'} imply that $B'(u_n)\to B'(u)$ in $E'$. Since $P_i$ is continuous, it follows that $K_t(u_n) \to K_t(u)$, thus $K_t$ is completely continuous, and therefore is compact for all $t\in[0,1]$. Moreover, by the definitions of $U_t$ and $K_t$, it is clear that $\eta$ satisfies $(\Gamma_2)-(\Gamma_3)$. Via $(I_1)-(I_2)$ there is a constant $C=C(r)$ such that $||I'(u)||\leq C$ for all $u\in B_r$, hence by the definition of $\eta_t$, it yields
	\begin{eqnarray*}
		||\eta_t(u)|| \leq r +||I'(u)||	\leq r + C,
	\end{eqnarray*}
	and so $\eta_t$ maps bounded sets to bounded sets, namely $\eta$ satisfies $(\Gamma_4)$. Lastly, it suffices to show that $U_t$ is a homeomorphism of $E$ onto $E$, in order to complete the verification of $(\Gamma_1)$. By $(\Gamma_3)$, it suffices to show that $P_iU_t$ is a homeomorphism of $E_i$ onto $E_i, \ i=1,2$. Let $u,v \in E_i$,  for any $t\in[0,1]$, by the definition of $\chi$, if $||u||,||v||\geq R+2$, then
	\[
	\dfrac{t}{k}\Big|\Big|\chi(||u||)L_iu-\chi(||v||)L_iv\Big|\Big|= 0 \leq \dfrac{1}{2}||u-v||.
	\]
	Hence, without loss of generality, suppose that $||v||\leq R+2$. By the definitions of $\chi$ and $k$, and via the Mean Value Theorem,  for any $t\in[0,1]$, it yields
	\begin{eqnarray}\label{C1}
	\dfrac{t}{k}\Big|\Big|\chi(||u||)L_iu-\chi(||v||)L_iv\Big|\Big|
	&\leq& \dfrac{1}{k}||L_i||\ ||u-v|| + \dfrac{1}{k}||L_i|| \ |\chi(||u||)-\chi(||v||)| \ ||v|| \nonumber\\
	&\leq& \dfrac{1}{k}||L_i||(R+2)\left[1 +  \max_{\mathbb{R}}|\chi'(s)|\right] ||u-v|| \nonumber\\
	&\leq& \dfrac{1}{2}||u-v||.
	\end{eqnarray} 
	Thus, (\ref{C1}) holds for all $u,v \in E_i$. Now, for each \ $w \in E_i$ fixed, \ set \ $\mathcal{L}_w(u):E_i \to E_i$, given by
	$\mathcal{L}_w(u):=\dfrac{t}{k}\chi(||u||)L_iu + w$. Note that  due to  (\ref{C1}), $\mathcal{L}_w(u)$ is a contraction on $E_i$. Then, it follows from the contracting mapping theorem that $\mathcal{L}_w(u)$ has a unique fixed point $u_w$. Therefore, $u_w\in E_i$ is the unique such that $\mathcal{L}_w(u_w)= u_w$, which implies that $P_iU_t(u_w)= w$ is an one-to-one correspondence, and hence $P_iU_t$ is bijection. Furthermore, (\ref{C1}) yields
	\begin{eqnarray}\label{C2}
	||P_iU_t(u)-P_iU_t(v)|| &=& ||(u-v) - \dfrac{t}{k}(\chi(||u||)L_iu - \chi(||v||)L_iv)|| \nonumber\\
	&\geq&  ||u-v|| - \dfrac{1}{2}||u-v||= \dfrac{1}{2}||u-v||,
	\end{eqnarray}
	which implies that $(P_iU_t)^{-1}$ is continuous. Since $P_iU_t$ is continuous by definition, it ensures that $P_iU_t$ is a homeomorphism of $E_i$ onto $E_i$. 
	Consequently, $\eta$ satisfies $(\Gamma_1)$ and finally the claim is proved.
\end{proof}
\qquad The following lemma gives a significant information about the level $c$.
\begin{lemma}\label{prop}(See \cite{BR} Proposition 1.17.)
	If $I$ satisfies $(I_3)$, then $c\geq \alpha.$
\end{lemma}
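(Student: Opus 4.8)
The plan is to show that the minimax value $c$ defined in $(I_4)$ satisfies $c \geq \alpha$ by establishing $\sup_{u \in \overline{Q}} I(h_1(u)) \geq \alpha$ for \emph{every} admissible $h \in \Lambda$ and then passing to the infimum. Since $I \geq \alpha$ on $S$ by $(I_3)(i)$, it is enough to produce, for each such $h$, a point $u^* \in \overline{Q}$ with $h_1(u^*) \in S$. This point will be extracted from the linking hypothesis $(I_3)(iii)$, whose definition requires checking two things for the homotopy $t \mapsto h_t$: that it belongs to the class $\Sigma$, and that it keeps $\partial Q$ away from $S$ throughout the deformation.

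First I would verify $h \in \Sigma$. Because every $h \in \Lambda$ is a finite composition $h = h^{(1)} \circ \cdots \circ h^{(m)}$ of maps in $\Gamma$, and $\Sigma$ is stable under composition (if $P_2\Psi_t(u)=u_2-W^\Psi_t(u)$ and $P_2\Phi_t(w)=w_2-W^\Phi_t(w)$ with both $W$'s compact, then $P_2(\Phi_t\circ\Psi_t)(u)=u_2-W^\Psi_t(u)-W^\Phi_t(\Psi_t(u))$, and the last term is compact since $\Psi_t$ maps bounded sets to bounded sets by $(\Gamma_4)$), it suffices to treat a single $g \in \Gamma$. Writing $g_t = U_t + K_t$ as in $(\Gamma_1)$ and pulling $P_2$ through $U_t$ via $(\Gamma_3)$ gives $P_2 g_t(u) = U_t(P_2 u) + P_2 K_t(u) = u_2 - W_t(u)$, with $W_t(u) = \bigl(u_2 - U_t(u_2)\bigr) - P_2 K_t(u)$; here $P_2 K_t$ is compact since $K_t$ is, and $(\Gamma_2)$ gives $g_0 = \mathrm{id}$. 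The essential point, and the main obstacle, is that the remaining linear piece $u_2 - U_t(u_2)$ be a compact perturbation of the identity on $E_2$: for the deformations produced by Lemma \ref{DL} this term is exactly $\frac{t}{k}\chi(\|u_2\|)L_2 u_2$, so its compactness must be read off from the structure of $L_2$ on $E_2$, in tandem with the complete continuity of $B'$ supplied by $(I_2)$ and Lemma \ref{B'}. This is precisely where the decomposition $E = E_1 \oplus E_2$ and the spectral features enter, and it is the delicate step.

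Next I would verify the separation condition. By the definition of $\Lambda$, $h_t(\partial Q) \subset I^{\frac{\alpha-\omega}{2}-\beta}$ for all $t$, with $\beta \in \bigl(0, \frac{\alpha-\omega}{2}\bigr)$, so every point of $h_t(\partial Q)$ has $I$-value at most $\frac{\alpha-\omega}{2}-\beta$, which lies strictly below $\alpha$. Since $I \geq \alpha$ on $S$ by $(I_3)(i)$, it follows that $h_t(\partial Q) \cap S = \emptyset$ for every $t \in [0,1]$.

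With both conditions in hand, the linking property $(I_3)(iii)$ applies to $\Phi = h$ and yields $h_t(Q) \cap S \neq \emptyset$ for all $t$; in particular $h_1(Q) \cap S \neq \emptyset$, so there is $u^* \in Q \subset \overline{Q}$ with $h_1(u^*) \in S$. Then $(I_3)(i)$ gives $I(h_1(u^*)) \geq \alpha$, whence $\sup_{u \in \overline{Q}} I(h_1(u)) \geq \alpha$. As $h \in \Lambda$ was arbitrary, taking the infimum in the definition of $c$ yields $c \geq \alpha$. The one genuinely technical ingredient is the membership $h \in \Sigma$ of the second paragraph; the separation and the concluding extraction are formal consequences of the sublevel constraint in $\Lambda$ and of the abstract linking definition.
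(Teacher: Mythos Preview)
Your overall strategy---reduce to $h_1(\overline Q)\cap S\neq\emptyset$, invoke the linking definition, and verify the separation via the sublevel constraint in $\Lambda$---is the right one, and the separation and concluding steps are fine. The gap is in the second paragraph: you try to show that every $h\in\Lambda$ actually lies in $\Sigma$, and this is false in general. For a single $g=U_t+K_t\in\Gamma$ you correctly compute $P_2g_t(u)=u_2-W_t(u)$ with $W_t(u)=(u_2-U_t(u_2))-P_2K_t(u)$, but $u_2\mapsto u_2-U_t(u_2)$ has no reason to be compact: the only hypothesis on $U_t$ is that it is a homeomorphism commuting with $P_i$. In the concrete $\eta$ of Lemma~\ref{DL} this term is $\frac{t}{k}\chi(\|u_2\|)L_2u_2$, and $L_2$ is merely a bounded self-adjoint operator on $E_2$; in the second application $L_2=-I_2$ on an infinite-dimensional $E_2$, so the map is certainly not compact. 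Your phrase ``its compactness must be read off from the structure of $L_2$'' names the obstacle but does not overcome it---and it cannot be overcome, because the conclusion is false.

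What the paper does instead is avoid the claim $h\in\Sigma$ altogether. It exploits the \emph{invertibility} of $U_t$: the condition $P_2h_t(u)=v$, i.e.\ $U_t(u_2)+P_2K_t(u)=v$, is equivalent to $u_2=U_t^{-1}\bigl(v-P_2K_t(u)\bigr)=:P_2Y_t(u)$, and now $Y_t$ \emph{is} compact because $K_t$ is. An induction on the number $m$ of factors in $h=h^{(1)}\circ\cdots\circ h^{(m)}$ propagates this reduction. One then sets $\Phi_t(u)=P_1h_t(u)+u_2-P_2Y_t(u)+v$; this $\Phi$ does lie in $\Sigma$, and by construction $\Phi_t(u)\in S$ iff $h_t(u)\in S$. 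The linking hypothesis is applied to $\Phi$, not to $h$. The key idea you are missing is this passage from $h$ to an auxiliary $\Phi\in\Sigma$ via inversion of $U_t$; without it the argument does not close.
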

\begin{proof}
	For the sake of completeness, this proof is included here. In fact, it suffices to show that
	\begin{equation}\label{s1l3e1}
	h_1(\bar{Q})\cap S \not= \emptyset,
	\end{equation}	
	for all $h \in \Lambda.$ In fact, if (\ref{s1l3e1}) holds, there is a $y\in h_1(\bar{Q})\cap S$, hence
	\begin{equation}\label{s1l3e2}
	\sup_{u\in\bar{Q}}I(h_1(u))\geq I(y)\geq \inf_{w\in S}I(w)\geq \alpha,
	\end{equation}
	due to $(I_3) \ (i)$. Since (\ref{s1l3e2}) holds for all $h\in \Lambda$, the definition of $c$ in $(I_4)$ yields $c\geq \alpha$. 
	The proof of (\ref{s1l3e1}) follows from the stronger assertion that
	\begin{equation}\label{s1l3e3}
	h_t(\bar{Q})\cap S \not= \emptyset,
	\end{equation}
	for all $h \in \Lambda$ and $t \in [0,1]$. Since $S-v \subset E_1$ via $(I_3) \ (i)$, then (\ref{s1l3e3}) is equivalent to finding, for each $t \in [0,1]$, a $u \in \bar{Q}$ such that
	\begin{eqnarray}\label{s1l3e4}
	&&P_1h_t(u) \in S - v \nonumber\\
	&&P_2h_t(u) = v.
	\end{eqnarray}
	In order to solve (\ref{s1l3e4}), it \ is necessary \ to convert it into an equivalent \ problem \ to which \ the linking geometry \ hypotheses \ can be applied. \ Suppose first that $h\in \Lambda$ with \ the corresponding $m=1$. \ Letting \ $u = u_1+u_2 \in E_1 \oplus E_2$ as usual, by $(\Gamma_1)$ and $(\Gamma_3)$, (\ref{s1l3e4}) becomes
	\begin{eqnarray}\label{s1l3e5}
	&& (i) \ P_1h_t(u) \in S-v \nonumber\\
	&& (ii) \ P_2h_t(u) = U_t(u_2)+ P_2K_t(u) = v.
	\end{eqnarray}
	More generally, suppose (\ref{s1l3e5}) $(ii)$ replaced by
	\begin{equation}\label{s1l3e6}
	P_2h_t(u)= P_2Z_t(u),
	\end{equation}
	where $Z_t(u)$ is an arbitrary compact operator with $Z_0(u)=v$. Note that  in (\ref{s1l3e5}), $Z_t(u)=v$ is the compact constant operator, for all $t\in [0,1]$. Again via $(\Gamma_1)$ and $(\Gamma_3)$ (\ref{s1l3e6}) is equivalent to
	\[
	U_t(u_2) = -P_2\Big(K_t(u) - Z_t(u)\Big),
	\]
	which is equivalent to
	\begin{eqnarray}\label{s1l3e7}
	u_2 = U_t^{-1}\left(-P_2\Big(K_t(u) - Z_t(u)\Big)\right) \equiv P_2Y_t(u),
	\end{eqnarray}
	where $Y_t$ is compact, due to the compactness of $K_t$ and $Z_t$, and $Y_0(u)=v$, since 
	\[
	U_0^{-1}\left(-P_2\Big(K_0(u) - Z_0(u)\Big)\right)=-P_2\Big( - Z_0(u)\Big) = v.
	\]
	Now, suppose by induction that (\ref{s1l3e6}) is equivalent to
	\begin{equation}\label{s1l3e8}
	u_2= P_2Y_t(u),
	\end{equation} 
	with $Y_t$ compact and $Y_0(u)=v$, whenever $h\in \Lambda$ with the corresponding $m=n-1$. Then, let $h \in \Lambda$ with $m=n$ so $h= h^{(1)}\circ\cdot\cdot\cdot\circ h^{(m)}$ and let $\hat{h}= h^{(2)}\circ\cdot\cdot\cdot\circ h^{(m)}$. Hence $h= h^{(1)}\circ\hat{h}$ and again by $(\Gamma_1)$ and $(\Gamma_3)$, the equation 
	\[
	v=P_2h_t(u)= P_2\Big(U_t^{(1)}+K_t^{(1)}\Big)\hat{h}_t(u)= U_t^{(1)}P_2\hat{h}_t(u) + P_2K_t^{(1)}\hat{h}_t(u)
	\]
	is equivalent to
	\begin{equation}\label{s1l3e9}
	P_2\hat{h}_t(u) = (U_t^{(1)})^{-1}(-P_2K_t^{(1)}(\hat{h}_t(u))+v) =:P_2\hat{Z}_t(u),
	\end{equation}
	where $h^{(1)}= U^{(1)}+K^{(1)}$ and since  $K_t^{(1)}$ is  compact, $\hat{Z}_t$ given by the right hand side of (\ref{s1l3e9}) is compact and $P_2\hat{Z}_0(u)= v$, since $K_0=0$. Thus, by induction hypothesis there is a compact ${Y}_t$ such that (\ref{s1l3e9}) is equivalent to solving (\ref{s1l3e8}).
	
	\qquad Now set $\Phi_t(u)= P_1h_t(u)+ u_2-P_2Y_t(u)+v$, and note that $\Phi \in \Sigma$, since 
	\[
	P_2\Phi_t= u_2 - (P_2Y_t -v),
	\] and
	\[
	\Phi_0(u)= P_1h_0(u)+ u_2-P_2Y_0(u)+v=P_1u+u_2 -v+v= u.
	\]
	In addition, $P_1\Phi_t= P_1h_t$ and provided that $P_2h_t=v$ is equivalent to (\ref{s1l3e8}),  due to all remarks above, it follows that $P_2\Phi_t=v$ is equivalent to $P_2h_t=v$, by the definition of $\Phi_t$. Therefore, $\Phi_t(u)\in S$ if and only if $h_t(u)\in S$, hence to obtain (\ref{s1l3e3}) and complete the proof, it is only necessary to show that
	\begin{equation}\label{s1l3e10}
	\Phi_t(Q)\cap S\not= \emptyset,
	\end{equation}
	for all $t \in[0,1]$. Since $\Phi \in \Sigma$ and via $(I_3) \ (iii)$, $S$ and $\partial Q$ link, then (\ref{s1l3e10}) holds if
	\begin{equation}\label{s1l3e11}
	\Phi_t(\partial Q)\cap S = \emptyset.
	\end{equation} 
	Suppose the contrary, so there is a $u\in \partial Q$ and $t \in [0,1]$ such that $\Phi_t(u) \in S$. Then $h_t(u) \in S$, but  $h_t(\partial Q)\subset I^{\frac{\alpha+\omega}{2}-\beta}$,
	since $h \in \Lambda$. On the other hand, $S\cap I^{\frac{\alpha+\omega}{2}-\beta} = \emptyset$, due to $(I_3) \ (i)$ and provided that $\beta \in \left(0, \dfrac{\alpha-\omega}{2}\right)$, hence it yields a contradiction. Thus (\ref{s1l3e11}) is satisfied and the proof of the lemma is complete. 
\end{proof}	

\qquad Now,under the knowledge of  all results in previous sections, Theorem \ref{ALT} can be finally proved.\\ 

\begin{proof}[Proof of Theorem \ref{ALT}.] First, since the identity map $h(u)=u$ is in $\Lambda$, then $c < +\infty$ in view of $(I_2)$ and $(I_4)$. Moreover, $c\geq\alpha$ by Lemma \ref{prop}. Now suppose that $c$ is not a critical value of $I,$ then $I'(u)\not= 0$, for all $u \in I^{-1}(c),$ hence there exists $\varepsilon>0$ such that
\begin{equation}\label{e10}
\left(1+||u||\right)||I'(u)|| \geq \sqrt{2\varepsilon}, \text{ \ for all \ } u \in I^{-1}([c-\varepsilon,c+\varepsilon]).
\end{equation}
If not, for all $n \in \mathbb{N}$ there exists a sequence of positive $\varepsilon_n \to 0$ and $u_n \in I^{-1}([c-\varepsilon_n,c+\varepsilon_n])$ such that 
\[
\left(1+||u_n||\right)||I'(u_n)|| < \sqrt{2\varepsilon_n}.
\]
From $(I_4)$ this sequence is bounded and then it possesses a weakly convergent subsequence, still denoted by $(u_n)$, namely $u_n \rightharpoonup u$ as $n \to +\infty$, for some $u \in E$. By $(I_2)$ and Lemma \ref{B'} one has $\ B'(u_n) \to B'(u)$ along this subsequence and by assumption, $I'(u_n) \to 0$, then \ it \ follows  that  $Lu_n = I'(u_n)-B'(u_n) \to -B'(u)$ as $n \to +\infty$. \ On the other hand, \ $Lu_n$ \ also \ converges \ weakly \ to \ $Lu$ \ along \ this \ subsequence. Hence $Lu= -B'(u)$ and \ \ $Lu_n \to Lu$ \ \ strongly, \ then \ \ $I'(u)=Lu + B'(u)=0$. \ \ Since $I(u_n)\to c$, again by \ \ $(I_2)$ \ \ it \ follows \ that \ \ $I(u_n) = \dfrac{1}{2}\left(Lu_n,u_n\right) + B(u_n) \to I(u) = c$. But it means that $c$ is a critical value of $I$, contrary to assumption. Thus there exists an $\varepsilon$ as desired in (\ref{e10}). It can further be assumed $\varepsilon < \dfrac{1}{10}$.
By the definition of infimum, choose an $h \in \Lambda$ with corresponding $\beta$ such that
\begin{equation}\label{e11}
c\leq \sup_{u\in \bar{Q}}I(h_1(u)) \leq c+\varepsilon \text{\ and \ } h_t(\partial Q) \subset I^{\frac{\alpha - \omega}{2} - \beta}.
\end{equation}
Since $h \in \Lambda$, $h_t$ maps bounded sets on bounded sets, due to the definition of $\Gamma$, hence $h_1(\bar{Q})$ is bounded. Therefore, there is an $R \in \mathbb{N}$ such that $h_1(\bar{Q}) \subset B_{\frac{R}{2}}.$ By Lemma \ref{DL}, with $\varrho= \dfrac{1}{2}\min(\beta,\varepsilon)$, there exist $\eta \in \Gamma$ and $k \in \mathbb{N}$ such that $\eta_t^{ks}$ satisfies $i)$ and $ii)$ of that lemma. Let $g_t(u) = \eta_t^{ks}(h_t(u))$, provided that $h_t(\partial Q) \subset I^{\frac{\alpha - \omega}{2}- \beta}$, in view of $i)$ Lemma \ref{DL}, $g_t(\partial Q) \subset I^{\frac{\alpha - \omega}{2}- \frac{\beta}{2}}$. Hence $g \in \Lambda$ and from (\ref{e9}) it follows that
\begin{equation}\label{e12}
c\leq \sup_{u\in \bar{Q}}I(g_1(u)).
\end{equation}
From (\ref{e11}),  $I(h_1(u)) \leq c+\varepsilon$ for all $u \in \bar{Q}.$ Thus, if $h_1(u)\in I^{-1}([c-\varepsilon,c+\varepsilon])$, by (\ref{e10}) it is possible to apply $ii)$ of the Lemma \ref{DL} to conclude that $g_1(u) \in I^{c - \frac{\varepsilon}{2}}.$ On the other hand, if $h_1(u) \in I^{c-\varepsilon},$ then $i)$ of the Lemma \ref{DL} yields
\[
I(g_1(u)) = I(\eta_1^{ks}(h_1(u))) \leq I(h_1(u)) + \varrho \leq c-\varepsilon + \dfrac{\varepsilon}{2} = c - \dfrac{\varepsilon}{2},
\]
thus $g_1(u) \in I^{c-\frac{\varepsilon}{2}}$ by the choice of $\varrho$. Consequently, it follows that
\begin{equation}\label{e13}
\sup_{u\in \bar{Q}}I(g_1(u)) \leq c-\dfrac{\varepsilon}{2},
\end{equation}
which contradicts (\ref{e12}) and the theorem is proved.
\end{proof}


\section{Application to Asymptotically Linear Schr\"odinger Equations in $\mathbb{R}^N$}

\qquad This section introduce two applications for the abstract critical point theorem developed previously. The main difference between them is how to obtain the linking geometry, based on their spectra, which are very different to each other.

\qquad First, consider problem $(P)$
\begin{equation}\label{P}
-\Delta u + V(x)u = g(x,u) \text{ \ in \ } \mathbb{R}^N,
\end{equation}

\hspace{-0.5cm}for  $N\geq3$, where the potential $V$ satisfies $(V_1)-(V_2)$ as stated in the Introduction.\\

\begin{Remark} \label{s2r1}
	In view of hypothesis $(V_1)$, $V(x)$ is bounded and $\sigma_{ess}(A)= [V_{\infty}, + \infty)$ \ (see \ \cite{St} \ Theorem \ 3.15, \ page \ 44), \ \ hence \ \ $\sigma(A)\cap(-\infty,V_{\infty})= \sigma_d(A)\cap(-\infty, V_{\infty}).$ Furthermore, hypothesis $(V_2)$ implies that either $0 \notin \sigma(A)$ or $0\in \sigma_d(A)$, since $0\in (\sigma^-,\sigma^+) $ is an isolated point and the essential spectrum $[V_\infty,+\infty)$ does not have isolated points, hence $0 \notin \sigma_{ess}(A)= [V_{\infty}, +\infty)$, which implies that $V_{\infty}>0$, namely $V_\infty$ is positive, therefore assumption $V_{\infty}>0$ in $(V_1)$ is redundant. With this in hand, it is possible to introduce by means of operator $A$ an equivalent norm $||\cdot||$ to the usual norm $||\cdot||_{H^1(\mathbb{R}^N)}$, in $H^1(\mathbb{R}^N)$ (see \cite{CT}, Lemma 1.2). Thus, \ $E= \Big(H^1(\mathbb{R}^N),||\cdot||\Big)$ will be the Hilbert space used in order to apply Theorem \ref{ALT}. Finally, from $(V_2)$ $ \emptyset \not= \sigma(A)\cap(-\infty,0)= \sigma_{d}(A)\cap(-\infty, 0), $ i. e., operator $A$ has negative eigenvalues. Furthermore, this set is finite (see \cite{EK} Theorem 30, page 150).
	An example satisfying $(V_1)-(V_2)$ is given by a continuous $V(x)$ such that
	\[
	 V(x) = \left\{
	\begin{array}{lll}
	- V_0 \ \ \ \ \ \ \ \ \ |x|< R \\
	\\
	\ \ V_\infty \ \ \ \ \ \ \ \  |x|>2R,\\
	\end{array}
	\right.
	\]
	where $V_0> \dfrac{\lambda_1(1)}{R^2}>0$ is a constant and $\lambda_1(1)$ is the first eigenvalue of the operator $(-\Delta, H_0^1(B_1(0)))$.
\end{Remark}

\qquad Henceforth consider the case where $g(x,s) = h(x)f(s),$ and $h$ satisfies $(h_1)$.
Furthermore, $f$ is asymptotically linear satisfying $(f_1)-(f_3)$.\\
\begin{Remark}\label{s2r2}
	Setting $||h||_{L^\infty(\mathbb{R}^N)}:= h_{\infty}$, observe  that assumption $(h_1)$ implies that $0<h_{\infty}<+\infty$. In addition, $(f_2)$ implies that $\displaystyle\lim_{s \to +\infty}F(s)=+\infty$ and $\displaystyle\lim_{s\to +\infty}\dfrac{f(s)}{s}=a$. Moreover, due to assumptions $(f_1)-(f_2)$ there exists $\kappa>0$ such that $|f(s)|\leq \kappa|s|$ for all $s \in \mathbb{R}$ and $a\leq\kappa$. An example satisfying $(f_1)-(f_3)$ but not with $\dfrac{f(s)}{s}$ increasing, is a continuous $f(s)$ such that
	\[
	f(s)= \left\{
	\begin{array}{lll}
	\dfrac{s^7 - \frac{3}{2}s^5 + 2 s^3}{1 + s^6} \ \ \ \ \ |s| < 5, \\
	\\
	\ \ \ \ \ \ \dfrac{s^3}{1+s^2} \ \ \ \ \ \ \ \ \ |s|>10.\\
	\end{array}
	\right.
	\]	
\end{Remark}
\vspace{0.5cm}

\qquad Let $I:E \to \mathbb{R}$ be the energy functional associated with problem $(P)$ in (\ref{P}), which is given by 
\begin{equation}\label{I}
I(u)= \dfrac{1}{2}\displaystyle\int_{\mathbb{R}^N}\Big(|\nabla u|^2+V(x)u^2\Big)  - \int_{\mathbb{R}^N}h(x)F(u)dx,
\end{equation}
for all $u \in E$. As observed in Remark \ref{s2r1}, the set of eigenvalues $\sigma_d(A)\cap(-\infty,0)$ is finite, then one can denote it by $\{\lambda_i\}_{i=1}^j$, for some $j \in \mathbb{N}$, counting multiplicities, and in addition, denote by $\varphi_i\in E$ the eigenfunction associated with $\lambda_i$, for $i=1,...,j$ and then set $E^-:= span
\{\varphi_i\}_{i=1}^j$. Moreover, setting $E^0:= \ker(A)$, if $0 \notin \sigma(A)$, then $E^0=\{0\}$, if not, then $0\in \sigma_d(A)$, hence $E^0$ is finite dimensional. Thus, $E^-\oplus E^0$, it is a finite dimensional subspace of $E$ and setting  $E^+:=(E^-\oplus E^0)^{\perp}$, it is the subspace of $E$ in which operator $A$ is positive definite. With this, $E=E^+\oplus E^-\oplus E^0 $ and every function $u\in E$ can be uniquely written as $u=u^++u^-+ u^0$, with $u^+\in E^+$, $u^0 \in E^0$ and $u^- \in E^-$. Furthermore, as in \cite{CT} operator $A$ induce an equivalent norm $||\cdot||$ to the standard $H^1(\mathbb{R}^N)$-norm and a corresponding inner product $\big(\cdot,\cdot\big)$ in $E$ given by
\[
||u||^2:=  (Au^+,u^+)_2 - (Au^-,u^-)_2 + ||u^0||^2_2,
\]
and 
\begin{equation}\label{IP}
\big( u,v \big) = \left\{
\begin{array}{lllllll}
\ \ \ \displaystyle\int_{\mathbb{R}^N}\Big(\nabla u(x) \nabla v(x) + V(x)u(x)v(x)\Big)dx = (Au,v)_2 \ \ \ \ \ \ \ \ \ \text{if} \ \ u, v \in E^+, \\
\\
\ \ \ -\displaystyle\int_{\mathbb{R}^N}\Big(\nabla u(x) \nabla v(x) + V(x)u(x)v(x)\Big)dx = -(Au,v)_2\ \ \ \ \text{if} \ \ u,v \in E^-,\\
\\
\ \ \ (u,v)_2 \ \ \ \ \ \ \ \ \ \ \ \ \ \ \ \ \ \ \  \ \ \ \ \ \ \  \ \ \ \ \  \ \ \ \ \ \ \ \ \ \ \ \ \ \ \ \ \ \ \ \ \ \ \ \ \ \ \ \ \ \ \ \ \ \ \ \ \text{if} \ \ u,v \in E^0,\\
\\
\ \ \	0 \ \ \ \ \ \ \ \ \ \ \ \ \ \ \ \ \ \ \ \ \ \ \ \ \ \ \ \ \ \ \ \ \ \ \ \ \  \  \ \ \ \ \ \ \ \ \ \ \ \ \ \ \ \ \ \ \ \text{if} \ \ u \in E^j, \ v \in E^k, j\not=k,
\end{array}
\right.
\end{equation}
for $j,k \in \{+,-,0\}$. Henceforth, the Hilbert space used in this application is $E= \left(H^1(\mathbb{R}^N),||\cdot||\right)$ and in addition, it is possible to write
\[
I(u)= \dfrac{1}{2}\Big(||u^+||^2 - ||u^-||^2\Big) - \int_{\mathbb{R}^N}h(x)F(u(x))dx,
\]
for all $u=u^+ + u^-+ u^0 \in E.$ Note that the orthogonality among $E^+, E^-, E^0$, ensures that $u^+, u^- , u^0$ are also orthogonal in $L^2(\mathbb{\mathbb{R}^N}).$ As usual, $||u||^2_2= (u,u)_2$ denotes the norm and inner product in $L^2(\mathbb{\mathbb{R}^N})$, then $(u^j,u^k)_2 = 0, j\not=k$ and $j, k \in \{+,-,0\}$.

\qquad Denoting by $\{\mathcal{E}(\lambda)\}$ the spectral family of operator $A$, in view of the spectral theory (see \cite{BS}, Supplement S1.1; see also \cite{P} Chapter 3) it is possible to define $E_2:= \mathcal{E}(0)E=E^-\oplus E^0$ and $E_1 := (I-\mathcal{E}(0))E$. Furthermore, $\mathcal{E}(0) = \mathcal{E}(\lambda)$, for all $0<\lambda<\sigma^+$, by the definition of $\sigma^+$ in $(V_2)$, then $E_1 = (I-\mathcal{E}(\lambda))E$, for all $0<\lambda<\sigma^+$. Hence, by \cite{BS} (see Theorem 1.1', page 394) it follows that
$\sigma^+||u_1||^2_2\leq ||u_1||^2,$
for all $u_1 \in E_1$.
Therefore,
\[
\inf_{u_1 \in E_1, u_1 \not=0}\dfrac{||u_1||^2}{||u_1||^2_2} \geq \sigma^+,
\]
and then, setting
\begin{equation} \label{a_0}
a_0 := \inf_{u_1 \in E_1, u_1 \not=0}\dfrac{||u_1||^2}{||h^{\frac{1}{2}}u_1||^2_2}\geq \dfrac{1}{h_{\infty}}\inf_{u_1 \in E_1, u_1 \not=0}\dfrac{||u_1||^2}{||u_1||^2_2}\geq \dfrac{{\sigma}^+}{h_{\infty}}>0,
\end{equation}
it follows that
\begin{equation}
||u_1||^2\geq a_0\displaystyle\int_{\mathbb{R}^N}h(x)u^2_1(x)dx,
\end{equation}
for all $u_1 \in E_1$.

\qquad Under all the previous assumptions and notations, it is possible to state the first main result of this section.

\begin{theorem}\label{t1} Assume $V$ satisfying $(V_1)-(V_2)$, $h$ satisfying $(h_1)$ and f satisfying $(f_1)-(f_3)$, with $a>a_0$. Then problem $(P)$ has a nontrivial weak solution $u \in H^1(\mathbb{R}^N)$.
\end{theorem}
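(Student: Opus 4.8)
The plan is to obtain the solution by verifying the four hypotheses $(I_1)$--$(I_4)$ of Theorem \ref{ALT} for the energy functional $I$ in \eqref{I}, on the Hilbert space $E=(H^1(\mathbb{R}^N),\|\cdot\|)$ with the spectral splitting $E=E^+\oplus E^-\oplus E^0$ already constructed, taking $E_1:=E^+$ and $E_2:=E^-\oplus E^0$. Letting $L$ act as the identity on $E^+$, as $-\mathrm{Id}$ on $E^-$ and as $0$ on $E^0$, the decomposition $I(u)=\tfrac12(Lu,u)+B(u)$ with $B(u)=-\int_{\mathbb{R}^N}h(x)F(u)\,dx$ gives $(I_1)$ at once, since each $L_i$ is bounded and self-adjoint by construction. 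For $(I_2)$ the assumption $(h_1)$ is decisive: with $q=\frac{2^*}{2^*-p}$, Hölder's inequality and the Sobolev embedding bound $\int h|u|^p$ by $\|h\|_{L^q}\|u\|^p$, and the decay of $h\in L^\infty\cap L^q$ turns the weighted superposition operator into a compact one; together with the growth estimates from $(f_1)$--$(f_2)$ (i.e. $|f(s)|\le\kappa|s|$ and, for each $\varepsilon$, $|F(s)|\le\varepsilon s^2+C_\varepsilon|s|^p$) this yields that $B$ is weakly continuous and uniformly differentiable on bounded sets, so that Lemma \ref{B'} applies.

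The core is the linking geometry $(I_3)$. For $S$ I would take $S=\partial B_\rho\cap E_1$ (so $v=0$): on $E_1=E^+$ one has $I(u_1)=\tfrac12\|u_1\|^2-\int hF(u_1)$, and using $|F(s)|\le\varepsilon s^2+C_\varepsilon|s|^p$, the inequality $\|u_1\|^2\ge a_0\int h u_1^2$ following \eqref{a_0}, and Sobolev, one gets $I(u_1)\ge(\tfrac12-\tfrac{\varepsilon}{a_0})\|u_1\|^2-C\|u_1\|^p\ge\alpha>0$ for $\varepsilon$ and $\rho$ small. For $Q$ I would use the set of Lemma \ref{LE}, namely $Q=\{re:r\in[0,r_1]\}\oplus(\bar B_{r_2}\cap E_2)$, so that $S$ and $\partial Q$ link; the delicate point is the choice of $e\in E^+$. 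Since $a>a_0$, the characterization \eqref{a_0} allows one to pick $e$ with $\|e\|^2<a\int h e^2$, a near-minimizer of the Rayleigh quotient for the pencil $(A,h)$ on $E^+$, which is exactly the direction along which the limiting quadratic form turns negative. One then wants $I\le\omega:=0$ on $\partial Q$. On the face $\{0\}\oplus(\bar B_{r_2}\cap E_2)$ this is immediate, since there $I(u)=-\tfrac12\|u^-\|^2-\int hF(u)\le 0$ because $F\ge 0$; on the remaining faces it follows by showing $I(u)\to-\infty$ as $\|u\|\to\infty$ on the finite-dimensional space $\widehat E:=\mathbb{R}e\oplus E_2$, after which $r_1,r_2$ are fixed large.

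I expect this last step to be the main obstacle, precisely because $f$ is only asymptotically linear, so along $\widehat E$ the functional decays merely quadratically and $I\to-\infty$ is not automatic but must be read off the exact asymptotic slope. The mechanism is to normalize a putative sequence $u_n\in\widehat E$ with $\|u_n\|\to\infty$ and $I(u_n)\ge -C$, pass to $v_n=u_n/\|u_n\|\to v$ (strong convergence, as $\widehat E$ is finite dimensional), and use $\lim_{s\to\infty}F(s)/s^2=a/2$ with $|F(s)|\le\tfrac{\kappa}{2}s^2$ and dominated convergence to obtain $\|v^+\|^2-\|v^-\|^2\ge a\int h v^2$ with $\|v\|=1$. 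The choice of $e$ together with the fact that $w\mapsto\int h w^2$ is an equivalent norm on the finite-dimensional $E_2$ (since $h>0$ a.e.) is what must force $v=0$, a contradiction; making this elimination rigorous---that is, controlling the $L^2(h)$ cross terms between $e$ and $E_2$ so that the limiting form is negative on $\widehat E$---is exactly where the spectral hypothesis $(V_2)$ and the sharp bound $a>a_0$ are fully exploited.

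Finally, for $(I_4)$ I would show that any sequence with $I(u_n)$ bounded and $(1+\|u_n\|)\|I'(u_n)\|\to 0$ is bounded. The identity $I(u)-\tfrac12 I'(u)u=\int_{\mathbb{R}^N}h(x)Q(u)\,dx$, with $Q(s)=\tfrac12 f(s)s-F(s)$, shows that $\int hQ(u_n)$ stays bounded along the sequence. If $\|u_n\|\to\infty$, normalizing $v_n=u_n/\|u_n\|\rightharpoonup v$ and testing $I'(u_n)$ against the finite-dimensional components $u_n^-,u_n^0$, where convergence is strong, together with the asymptotic slope forces $v\ne 0$ on a set of positive measure; but then $|u_n|\to\infty$ there and $(f_3)$ gives $Q(u_n)\to+\infty$, so Fatou's lemma yields $\int hQ(u_n)\to+\infty$, a contradiction. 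Hence $(I_4)$ holds, and Theorem \ref{ALT} produces a critical point of $I$ at a level $c\ge\alpha>0$; being at a positive level it is nontrivial, and by the definition of $I$ it is a nontrivial weak solution of $(P)$.
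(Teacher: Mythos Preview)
Your overall strategy matches the paper's: verify $(I_1)$--$(I_4)$ for the functional $I$ with the spectral splitting $E_1=E^+$, $E_2=E^-\oplus E^0$, and apply Theorem~\ref{ALT}. The treatment of $(I_1)$ and $(I_2)$ is the same; for $(I_3)$ you opt for an indirect argument (show $I\to-\infty$ on $\widehat E=\mathbb{R}e\oplus E_2$) rather than the paper's direct estimates on the three faces of $\partial Q$ (Lemma~\ref{l3}), but since $\dim\widehat E<\infty$ both routes rest on the same compactness, and you have correctly located the delicate point, namely the $L^2(h\,dx)$ cross terms between $e$ and $E_2$.

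There is, however, a genuine gap in your $(I_4)$ argument. Testing $I'(u_n)$ against the finite-dimensional components $u_n^-,u_n^0$ only yields $\|v_n^-\|,\|v_n^0\|\to 0$ along the normalized sequence; it does \emph{not} by itself force $v\neq 0$, because $v_n^+$ lives in the infinite-dimensional space $E^+$ and may converge weakly to zero while $\|v_n^+\|\to 1$. The paper (Lemma~\ref{l1}) closes the case $v=0$ by a different mechanism: from $I'(u_n)(u_n^+-u_n^-)=o(\|u_n\|^2)$ one obtains
\[
\int_{\mathbb{R}^N}h(x)\,\frac{f(u_n)}{u_n}\bigl(v_{+,n}^2-v_{-,n}^2\bigr)\,dx\longrightarrow 1,
\]
and then the integrability $h\in L^q(\mathbb{R}^N)$, $q=\tfrac{2^*}{2^*-p}$, from $(h_1)$ is used to show that the same integral tends to $0$: with $|f(s)/s|\le\kappa$ the integrand is bounded in $L^{2^*/p}(\mathbb{R}^N)$ and converges to $0$ a.e., hence weakly in $L^{2^*/p}$, while $h$ lies in the dual $L^{q}$. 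This is exactly where $(h_1)$ supplies the compactness that compensates for the lack of strong convergence of $v_n^+$; without it, the dichotomy $v=0$ versus $v\neq 0$ cannot be resolved and $(I_4)$ is not established.
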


\qquad In order to  prove Theorem \ref{t1} it is necessary to check that $I$ satisfies hypotheses $(I_1)-(I_4)$ in Theorem \ref{ALT}, and then it is possible to ensure the result by applying this theorem. First of all, see that $I \in C^1(E,\mathbb{R})$, due to the hypotheses assumed about $h$ and $f$.  Moreover, on one hand,
\[
\big(Lu,u\big)= \big(L_1u_1 + L_2u_2, u_1 +u_2\big)= \big(L_1u_1,u_1\big)+\big(L_2u_2,u_2\big),
\]
and on the other hand, denoting by $I_1:E_1\to E_1$ the identity operator in $E_1$, and by $P^-:E_2 \to E_2$ the projector operator of $E_2$ on $E^-$, note that $u_2 \in E_2$ is such that $u_2 = u^-+u^0$, hence
\begin{eqnarray*}
||u^+||^2 - ||u^-||^2= ||u_1||^2 - ||u_2 - u^0||^2 = \big(I_1(u_1),u_1\big)+\big(-P^-(u_2),u_2\big).
\end{eqnarray*}
Thus, setting $L_1 := I_1$ and $L_2 := -P^-$, it follows that $L_i:E_i\to E_i$ are bounded, linear and self-adjoint operators, for $i=1,2.$ Therefore $I(u)= \dfrac{1}{2}\big(Lu,u\big)+B(u)$, where
\begin{equation} \label{B}
B(u)= -\displaystyle\int_{\mathbb{R}^N}h(x)F(u(x))dx,
\end{equation}
and this gives $(I_1).$

\qquad In order to prove $(I_2)$ the following lemma is needed.

\begin{lemma} \label{l2}
	Assume that $(h_1)$ and $(f_1)-(f_2)$ hold for $I$, then $B$ given in (\ref{B}) is weakly continuous and uniformly differentiable on bounded subsets.
\end{lemma}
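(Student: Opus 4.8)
The plan is to prove the two assertions separately, relying throughout on the growth bound recorded in Remark~\ref{s2r2}, namely $|f(s)|\le\kappa|s|$, together with the near-zero behaviour in $(f_1)$. From these two facts I would first extract, for every $\varepsilon>0$, a constant $C_\varepsilon$ with $|f(s)|\le\varepsilon|s|+C_\varepsilon|s|^{p-1}$, and hence $|F(s)|\le\frac{\varepsilon}{2}|s|^2+\frac{C_\varepsilon}{p}|s|^p$, for all $s$, where $p\in(2,2^*)$ is the exponent from $(h_1)$. The decisive arithmetic fact is that the Hölder conjugate of $q=2^*/(2^*-p)$ is $q'=2^*/p$, so that $pq'=2^*$ and $2q'=2\cdot 2^*/p\in[2,2^*)$; this is what lets the weight $h\in L^q$ absorb the nonlinearity into the Sobolev embedding $E=H^1(\mathbb{R}^N)\hookrightarrow L^{2^*}(\mathbb{R}^N)$. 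The candidate differential is $B'(u)v=-\int_{\mathbb{R}^N}h f(u)v\,dx$, which is a bounded linear functional since $|B'(u)v|\le\kappa\|h\|_\infty\|u\|_2\|v\|_2\le C\|u\|\,\|v\|$.

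For weak continuity, suppose $u_n\rightharpoonup u$ in $E$; then $(u_n)$ is bounded, say $\|u_n\|\le M$. Given $\delta>0$ I would split $\mathbb{R}^N=B_\rho\cup B_\rho^c$. On the tail I estimate $|\int_{B_\rho^c}hF(u_n)|\le\frac{\varepsilon}{2}\|h\|_\infty\|u_n\|_2^2+\frac{C_\varepsilon}{p}\|h\|_{L^q(B_\rho^c)}\|u_n\|_{2^*}^p$: first fixing $\varepsilon$ so the quadratic term is below $\delta/3$ uniformly in $n$, and then, with $C_\varepsilon$ now determined, choosing $\rho$ large so that $\|h\|_{L^q(B_\rho^c)}$ makes the $p$-term below $\delta/3$ (uniformly in $n$, and identically for the limit $u$). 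On the fixed ball $B_\rho$ the embedding $H^1(B_\rho)\hookrightarrow\hookrightarrow L^p(B_\rho)$ is compact, so along a subsequence $u_n\to u$ in $L^p(B_\rho)$ and a.e., whence $hF(u_n)\to hF(u)$ in $L^1(B_\rho)$ by the generalized dominated convergence theorem with the domination $h|F(u_n)|\le h(\frac{\varepsilon}{2}u_n^2+\frac{C_\varepsilon}{p}|u_n|^p)$. Combining, $\limsup_n|B(u_n)-B(u)|\le\delta$ along the subsequence; since the limit is subsequence-independent, the whole sequence converges.

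For uniform differentiability, fix $R$ and let $u,u+v\in B_R$. Writing $F(u+v)-F(u)-f(u)v=\int_0^1[f(u+tv)-f(u)]v\,dt$ and applying Cauchy--Schwarz with the weight $h$, I reduce the problem to
\[
|B(u+v)-B(u)-B'(u)v|\le\left(\int_{\mathbb{R}^N}h v^2\right)^{1/2}\left(\int_{\mathbb{R}^N}h\phi^2\right)^{1/2},\qquad \phi:=\int_0^1|f(u+tv)-f(u)|\,dt,
\]
where the first factor is $\le C\|v\|$. It then suffices to show $\int h\phi^2$ is small, uniformly in $u$, once $\|v\|$ is small. Using $\phi\le\kappa(2|u|+|v|)=:\kappa w$ and Hölder with the pair $(q,q')$, the tail $\int_{B_\rho^c}h\phi^2\le\kappa^2\|h\|_{L^q(B_\rho^c)}\|w\|_{2q'}^2$ is made small by choosing $\rho$ large (note $\|w\|_{2q'}\le C\|w\|\le 4CR$). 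On $B_\rho$ I decompose according to the pointwise size of $v$ and $u$: on $\{|v|>\tau\}$ and on $\{|u|>K\}$ Chebyshev's inequality bounds the measure by $C^2\|v\|^2/\tau^2$ and $C^2R^2/K^2$, and a three-exponent Hölder inequality (with the slack $\frac1q+\frac{2}{2^*}<1$ coming precisely from $p>2$) produces a small factor $|\,\cdot\,|^{(p-2)/2^*}$; on the remaining moderate set $\{|v|\le\tau,\ |u|\le K\}$ the uniform continuity of $f$ on $[-(K+\tau),K+\tau]$ gives $\phi\le\omega_K(\tau)$, whence $\int h\phi^2\le\omega_K(\tau)^2\|h\|_\infty|B_\rho|$. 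Choosing the parameters in the order $\rho$, then $K$, then $\tau$, and finally $\delta$ (so that $\|v\|\le\delta$ forces the large-$v$ contribution down) yields the required bound with $\delta$ depending only on $R$ and $\varepsilon$.

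The routine part is the weak continuity; the genuine obstacle is the uniform differentiability, where the difficulty is that $v$ small in norm is not small pointwise and $\mathbb{R}^N$ is non-compact, so neither uniform continuity of $f$ nor Rellich compactness applies directly. The resolution is the splitting above: smallness comes from uniform continuity on the moderate region, from small Lebesgue measure (Chebyshev) on the large-amplitude regions, and from the $L^q$-smallness of $h$ on the spatial tail, the three being reconciled by the inequality $2q'\in[2,2^*)$, i.e.\ by $p\in(2,2^*)$.
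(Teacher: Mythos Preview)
Your argument is correct, but it proceeds along a different line from the paper's proof. In the paper, both assertions are handled by the same mechanism: one shows that the relevant integrands are bounded in $L^{2^*/p}(\mathbb{R}^N)$ and converge a.e., hence converge weakly in $L^{2^*/p}$, and then one simply pairs against $h\in L^q$ (the dual exponent) to conclude. For weak continuity this is applied to $F(u_n)$; for uniform differentiability the paper argues by contradiction, extracting sequences $u_n,v_n$ with $\|v_n\|\to 0$, and applies the same weak-$L^{2^*/p}$ argument to $|f(z_n)-f(u_n)|^2$. No spatial splitting or Chebyshev estimates are used.

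Your route instead localizes: tail smallness comes from $\|h\|_{L^q(B_\rho^c)}\to 0$, while on the ball you invoke Rellich compactness (for weak continuity) and a level-set decomposition plus uniform continuity of $f$ on compacts (for uniform differentiability). This is longer and requires tracking several parameters, but it is constructive---you actually exhibit the $\delta=\delta(R,\varepsilon)$---and it makes transparent the separate roles played by the integrability of $h$, the continuity of $f$, and the slack $1/q+2/2^*<1$ arising from $p>2$. The paper's approach is shorter and avoids the parameter bookkeeping, at the price of a contradiction argument and an appeal to a weak-convergence lemma (cited there as Brezis--Lieb) as a black box.
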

\begin{proof}
	Let $u_n \rightharpoonup u$ be a \ sequence \ in $E$, \ then $u_n(x) \to u(x)$ almost \ everywhere \ in $\mathbb{R}^N$, \ and $F(u_n)(x) \to F(u)(x)$ almost everywhere in $\mathbb{R}^N$, since $F(s)$ is a continuous function. Moreover, $(\ref{F})$ yields $|F(u_n)|^{\frac{2^*}{p}}) \in L^1(\mathbb{R}^N),$
	since \ $2 < 2\dfrac{2^*}{p}<2^*$ \ and \ $(u_n) \ \subset \ E\hookrightarrow L^s(\mathbb{R}^N),$ \ for \ $2 \leq s \leq 2^*$. Thus \ $\big(F(u_n(\cdot))\big)\subset L^{\frac{2^*}{p}}(\mathbb{R}^N)$ is bounded, provided that $(u_n)$ is bounded in $E$ and then it is bounded in $L^{2\frac{2^*}{p}}(\mathbb{R}^N)$ and in $L^{2^*}(\mathbb{R}^N)$. Since \ $F(u_n)(x)\to F(u)(x)$ \ almost \ everywhere \ in \ $\mathbb{R}^{N}$ and $||F(u_n)||_{L^{\frac{2^*}{p}}(\mathbb{R}^N)} \leq C$ for all $n\in\mathbb{N}$, by Brezis-Lieb's Lemma in \cite{BL}, $F(u_n) \rightharpoonup F(u)$ in $L^{\frac{2^*}{p}}(\mathbb{R}^N)$. Provided that $(h_1)$ implies that $h \in L^q(\mathbb{R}^N)$, where $q$ is the conjugate exponent of $\dfrac{2^*}{p}$, then
	\[
	\int_{\mathbb{R}^N}h(x)F(u_n(x))dx \to \int_{\mathbb{R}^N}h(x)F(u(x))dx,
	\]
	as $n\to +\infty.$ Therefore $B$ is weakly continuous.
	
	\qquad Showing that $B$ is uniformly differentiable on bounded subsets of $E$ means that given $\varepsilon>0$ and $B_R \subset E$, there exists $\delta>0$ such that 
	\[
	|B(u+v)-B(u) - B'(u)v|<\varepsilon||v||,
	\]
	for all $u+v \in B_R$  with $||v||<\delta$. First, note that $B$ satisfies
	\begin{eqnarray}\label{ud1}
	\big|B(u+v) - B(u) -B'(u)v\big|
	&\leq&
	h_\infty^{\frac{1}{2}}\int_{\mathbb{R}^N}(h(x))^{\frac{1}{2}}|f(z(x))-f(u(x))||v(x)|dx,
	\end{eqnarray}
	since for $\psi(t):= F(u+tv)$, it yields $\psi'(t)=f(u+tv)v$. Hence, Mean Value Theorem implies there exists some function $\theta(x)$, such that $0<\theta(x)<1$, almost everywhere in $\mathbb{R}^N$ and writing $z=u+\theta v,$ it follows that
	\[
	F(u+v)-F(u)= \psi(1)-\psi(0)= \psi'(\theta)=f(z)v,
	\]
	almost everywhere in $\mathbb{R}^N$. Moreover, provided that $h \in L^\infty(\mathbb{R}^N)$, from (\ref{ud1})
	\begin{eqnarray}\label{ud2}
	\big|B(u+v) - B(u) -B'(u)v\big|
	&\leq& h_\infty^{\frac{1}{2}}||\xi||_{L^2(\mathbb{R}^N)}\ ||v||_{L^2(\mathbb{R}^N)}\nonumber\\
	&\leq&h_\infty^{\frac{1}{2}}C_{2}||\xi||_{L^2(\mathbb{R}^N)}\||v||,
	\end{eqnarray}
	where $C_{2}>0$ is the constant given by the continuous embedding $E \hookrightarrow L^{2}(\mathbb{R}^N)$ and ${\xi := h^{\frac{1}{2}}(\cdot)|f(z(\cdot))-f(u(\cdot))|}$ it belongs to $L^2(\mathbb{R}^N)$. Indeed, since $\dfrac{2^*}{p}$ is the conjugate exponent of $q$, applying H\"older's Inequality for $q$ and $\dfrac{2^*}{p}$ it follows that
	\begin{eqnarray*}
		\int_{\mathbb{R}^N}|\xi|^2dx &=& \int_{\mathbb{R}^N}h(x)|f(z(x))-f(u(x))|^2dx \\
		&\leq&||h||_{L^q(\mathbb{R}^N)} \ ||f(z(x))-f(u(x))||_{L^{2\frac{2^*}{p}}(\mathbb{R}^N)}^2\\
		&<& +\infty,
	\end{eqnarray*}
	since $2<2\dfrac{2^*}{p}<2^*$ and $|f(u)|^{2\frac{2^*}{p}}\leq \kappa^{2\frac{2^*}{p}}|u|^{2\frac{2^*}{p}} \in L^1(\mathbb{R}^N)$  due to assumption $(h_1)$. Observe that by (\ref{ud2}) is sufficient to show that given $\varepsilon>0$ and $B_R\subset E$, there exists $\delta>0$ such that $||\xi||_{L^2(\mathbb{R}^N)} \leq \dfrac{\varepsilon}{h_\infty^{\frac{1}{2}}C_2}$ for all $u+v \in B_R$ with $||v||<\delta$.
	
	\qquad In \ order \ to \ prove this \ indirectly, \ suppose \ there \ exists $\varepsilon_0>0$ and $
	B_{R_0}\subset E$ fixed, such that for all $\delta>0$ it is possible to obtain $u_\delta + v_\delta \in B_{R_0}$ with $||v_\delta||< \delta$ and $||\xi_\delta||_{L^2(\mathbb{R}^N)}>\dfrac{\varepsilon_0}{h_\infty^{\frac{1}{2}}C_2}$, where
	\[
	\xi_\delta = h^{\frac{1}{2}}(\cdot)|f(z_\delta(\cdot))-f(u_\delta(\cdot))| \text{ \ and \ } z_\delta = u_\delta + \theta v_\delta.
	\]
	Choosing $\delta_n = \dfrac{1}{n}$, for each $n \in \mathbb{N}$ there exist $u_n +v_n \in B_{R_0}$ such that $||v_n||\leq \dfrac{1}{n}$ and 
	\[
	||\xi_n||_{L^2(\mathbb{R}^N)}>\dfrac{\varepsilon_0}{h_\infty^{\frac{1}{2}}C_2}.
	\]
	Hence, $v_n \to 0$ in $E$ and $u_n \rightharpoonup u$ in $E$, up to subsequences as $n \to +\infty$. In addition, $z_n \rightharpoonup u$ in $E$ and $z_n(x), u_n(x)\to u(x)$ almost everywhere in $\mathbb{R}^N$, up to subsequences. Thus,
	\[
	|f(z_n(x))-f(u_n(x))|^2\to 0,
	\]
	almost everywhere in $\mathbb{R}^N$ as $n \to +\infty$. Moreover, $(z_n)$ and $(u_n)$ are bounded in $E$ and the Sobolev embedding $L^{2\frac{2^*}{p}}(\mathbb{R}^N) \hookrightarrow E$ holds, then
	\begin{eqnarray*}
		||\ |f(z_n)-f(u_n)|^2||_{L^{\frac{2^*}{p}}(\mathbb{R}^N)}^{\frac{2^*}{p}}
		&\leq& C\left(||f(z_n)||_{L^{2\frac{2^*}{p}}(\mathbb{R}^N)}^{2\frac{2^*}{p}} + ||f(u_n)||_{L^{2\frac{2^*}{p}}(\mathbb{R}^N)}^{2\frac{2^*}{p}}\right) \\
		&\leq& C\kappa^{2\frac{2^*}{p}}\left(||z_n||_{L^{2\frac{2^*}{p}}(\mathbb{R}^N)}^{2\frac{2^*}{p}}+||u_n||_{L^{2\frac{2^*}{p}}(\mathbb{R}^N)}^{2\frac{2^*}{p}}\right)\\
			&\leq&2C(C_{2\frac{2^*}{p}}\kappa R_0)^{2\frac{2^*}{p}},
	\end{eqnarray*}
	where $C_{2\frac{2^*}{p}}>0$ is the constant given by the continuous embedding $E \hookrightarrow L^{2\frac{2^*}{p}}(\mathbb{R}^N)$.
	Therefore, the sequence $(|f(z_n)-f(u_n)|^2)$ is bounded in ${L^{\frac{2^*}{p}}(\mathbb{R}^N)}$. Applying Brezis-Lieb's Lemma again, it yields $|f(z_n)-f(u_n)|^2 \rightharpoonup 0$ in ${L^{\frac{2^*}{p}}(\mathbb{R}^N)}$ as $n \to +\infty$. Since $h\in L^q(\mathbb{R}^N)$, which is the dual space of ${L^{\frac{2^*}{p}}(\mathbb{R}^N)}$, by weak convergence it yields
	\begin{eqnarray*}
		||\xi_n||^2_{L^{2}(\mathbb{R}^N)} &=& \int_{\mathbb{R}^N}h(x)|f(z_n(x))-f(u_n(x))|^2dx \to 0,
	\end{eqnarray*}
	as $n \to +\infty,$ which contradicts $||\xi_n||_{L^2(\mathbb{R}^N)}>\dfrac{\varepsilon_0}{h_\infty^{\frac{1}{2}}C_2}$ and completes the proof.
\end{proof}

\qquad In order to prove $(I_3)$, choose $Q= \{re:r \in[0,r_1]\}\oplus(E_2\cap B_{r_2})$, and $S=\partial B_{\rho}\cap E_1$, where $0 < \rho < r_1 <r_2 $ are constants and $e \in E_1$, $||e||=1,$  must be a suitable vector. Hence, observe that if $a$ as in $(f_2)$ is such that $a>a_0$, then for $\varepsilon>0$ small enough and $a_\varepsilon:=a-\varepsilon$, it follows that $a> a_{\varepsilon} >a_0$, and by the definition of $a_0$ in  (\ref{a_0}), there exists some $e_0 \in E_1$ such that
\[
a_0 \displaystyle\int_{\mathbb{R}^N}h(x)e^2_0(x)dx\leq||e_0||^2\leq a_{\varepsilon}\displaystyle\int_{\mathbb{R}^N}h(x)e^2_0(x)dx.
\]
Normalizing $e_0$ it follows that $e= \dfrac{e_0}{||e_0||}\in E_1$ is such that
\begin{equation}\label{e}
1 = ||e||^2 = \int_{\mathbb{R}^N}\Big(|\nabla e(x)|^2 + V(x)e^2(x)\Big)dx \leq a_{\varepsilon} \int_{\mathbb{R}^N}h(x)e^2(x)dx.
\end{equation}
Therefore, choose such $e$ for the structure of $Q$. Furthermore, by Lemma \ref{LE} 
\ $S$ and $\partial Q$ \ ``link'' , \ where $\partial Q$  can be written as\ $\partial Q=Q_1\cap Q_2 \cap Q_3$, with \  $Q_1= \{0\}\oplus(E_2\cap B_{r_2}),$ \ \  $Q_2=\{re:r\in[0,r_1]\}\oplus(E_2\cap\partial B_{r_2})$ and $Q_3= \{r_1e\}\oplus(E_2\cap B_{r_2})$.
The following lemma shows that $I$ satisfies $(I_3) \ (i)-(ii)$ in Theorem \ref{ALT} for some $\alpha >0$, $\omega = 0$, and arbitrary $v\in E_2$.
\begin{lemma} \label{l3}
	Assume that $(V_1)-(V_2)$, $(h_1)$ and $(f_1)-(f_2)$ hold for $I$. For $Q$ and $S$ as above, and for sufficiently large $r_1>0$, $I|_S\geq \alpha >0$ and $I|_{\partial Q}\leq 0$ hold, for some $\alpha>0.$
\end{lemma}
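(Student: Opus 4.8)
The plan is to handle the two inequalities separately; the bound on $S$ is routine, while the bound on $\partial Q$ carries the real content.

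For $u \in S = \partial B_\rho \cap E_1$ one has $u = u^+$, so $I(u) = \tfrac12\rho^2 - \int h F(u)$. I would first record, from $(f_1)$ together with the linear bound $|f(s)| \leq \kappa|s|$ of Remark \ref{s2r2}, the elementary estimate that for every $\varepsilon > 0$ there is $C_\varepsilon > 0$ with $F(s) \leq \tfrac{\varepsilon}{2}s^2 + C_\varepsilon|s|^p$ for all $s$, $p \in (2,2^*)$ as in $(h_1)$. Since $u \in E_1$, the definition \eqref{a_0} of $a_0$ gives $\int h u^2 \leq a_0^{-1}\|u\|^2$, and Hölder's inequality with exponents $q$ and $2^*/p$ gives $\int h|u|^p \leq \|h\|_q\|u\|_{2^*}^p \leq C\|u\|^p$. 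Hence $I(u) \geq \tfrac12\big(1 - \tfrac{\varepsilon}{a_0}\big)\rho^2 - C'\rho^p$; choosing $\varepsilon < a_0$ and then $\rho$ small (legitimate since $p>2$) yields $I|_S \geq \alpha$ for some $\alpha > 0$.

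For $\partial Q$, note that $Q \subset \widetilde{E} := \mathbb{R}e \oplus E_2$, and $\widetilde{E}$ is finite-dimensional because $E_2 = E^- \oplus E^0$ is (Remark \ref{s2r1}). On the face $Q_1$ (where $r=0$) every $u = u^- + u^0$ satisfies $I(u) = -\tfrac12\|u^-\|^2 - \int h F(u) \leq 0$, since $F \geq 0$ and $h > 0$. On $Q_2 \cup Q_3$ one has $\|u\| \geq \min(r_1,r_2)$, so it suffices to establish the claim that $I(u) \to -\infty$ as $\|u\| \to \infty$ in $\widetilde{E}$: granting it, pick $\rho_0$ with $I \leq 0$ outside $B_{\rho_0}\cap\widetilde{E}$, and any $\rho < r_1 < r_2$ with $r_1 > \rho_0$ forces $I \leq 0$ on $Q_2 \cup Q_3$, hence on all of $\partial Q$. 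To prove the claim I would normalize: for $u_n \in \widetilde{E}$ with $\|u_n\| \to \infty$, set $v_n = u_n/\|u_n\|$; finite-dimensionality gives $v_n \to v$ with $\|v\|=1$ and $v_n \to v$ a.e. The decisive limit is $\|u_n\|^{-2}\int h F(u_n) \to \tfrac{a}{2}\int h v^2$, which follows from $F(s)/s^2 \to a/2$ in $(f_2)$ (where $v \neq 0$) and a dominated/Vitali argument with the $L^1$-convergent majorants $\tfrac{\kappa}{2}h v_n^2 \to \tfrac{\kappa}{2}h v^2$ (using $h \in L^\infty$ and $v_n \to v$ in $L^2$). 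Thus $\|u_n\|^{-2}I(u_n) \to \tfrac12\ell(v)$ with $\ell(v) := \|v^+\|^2 - \|v^-\|^2 - a\int h v^2$, and the claim follows once $\ell(v) < 0$ on the unit sphere of $\widetilde{E}$, which by compactness upgrades to a uniform negative bound.

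The step I expect to be hardest is precisely the strict negativity of $\ell$ on $\widetilde{E}\setminus\{0\}$. The sole source of positivity is the $E^+$-component $v^+ = \tau e$, and this is exactly what the choice \eqref{e} of $e$ is designed to tame: since $\|e\|^2 = 1 \leq a_\varepsilon \int h e^2$ with $a_\varepsilon < a$, the $e$-direction satisfies $\|v^+\|^2 = \tau^2 < a\,\tau^2\int h e^2$, so it is strictly dominated by $-a\int h v^2$ when $v = v^+$. The delicate part is to carry this domination through the components in $E^-\oplus E^0$ and the $h$-weighted cross terms --- in particular the kernel directions $E^0$, on which the quadratic part vanishes entirely --- by bounding $\int h(\tau e + w)^2$ from below for $w \in E_2$. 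This is where the hypothesis $a > a_0$ (and, when $\ker A \neq \{0\}$, the relative position of $e$ and $E^0$) is essential, and where the argument requires the most care.
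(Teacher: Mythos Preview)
Your treatment of $S$ is essentially the paper's; the only cosmetic difference is that you bound $\int h u^2$ through the definition of $a_0$ rather than through $h\in L^\infty$ and Sobolev.

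For $\partial Q$ your route genuinely differs. The paper argues \emph{directly} on the three faces $Q_1,Q_2,Q_3$: the first two (and half of the third) are disposed of by the quadratic part alone, and on the remaining portion of $Q_3$ it rescales $u_2=r_1v_2$, $v_2\in B_1\cap E_2$, and proves the \emph{uniform} limit
\[
\int_{\mathbb{R}^N}2h\,\frac{F\big(r_1(e+v_2)\big)}{r_1^{2}}\,dx\longrightarrow a\int_{\mathbb{R}^N}h(e+v_2)^2\,dx
\]
over $v_2\in B_1\cap E_2$. Uniformity comes from $\dim E_2<\infty$: the defect functionals $J_n$ attain their maxima on the compact ball, and one passes to the limit along the maximizers. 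Your approach is instead the indirect one---show $I\to-\infty$ on $\widetilde E=\mathbb{R} e\oplus E_2$ by normalizing and computing $\lim\|u_n\|^{-2}I(u_n)=\tfrac12\ell(v)$---which is precisely the method the paper adopts later in Lemma~\ref{l5} for the periodic case. Both routes rely on $\dim\widetilde E<\infty$ to upgrade weak to strong convergence; the paper's direct argument buys slightly sharper control of the constants $r_1,r_2$, while yours gives a cleaner qualitative picture.

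Your diagnosis of the hard step is accurate. The obstruction to $\ell(\tau e+w)<0$ is exactly the $h$-weighted cross term: $e\in E_1$ and $w\in E_2$ are $L^2$-orthogonal but not, in general, $L^2(h\,dx)$-orthogonal, so $\int h(\tau e+w)^2\ge \tau^2\int h\,e^2$ is not automatic. The paper, at the corresponding juncture, passes from $a_\varepsilon\int h(e+v_2)^2$ to $a_\varepsilon\int h\,e^2$, which amounts to the same inequality $\int h(e+v_2)^2\ge\int h\,e^2$; so the step you flag as requiring the most care is indeed the crux in either approach, and closing it (for instance by a more careful choice of $e$ relative to $E_2$ in the weighted inner product) is what remains to complete the argument.
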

\begin{proof} By definition, $S\subset E_1$, hence for all $u_1 \in S$ it yields
	\begin{eqnarray}\label{l3e1}
	I(u_1)	&\geq&\dfrac{1}{2}\rho^2- h_{\infty}\int_{\mathbb{R}^N}\left(\dfrac{\varepsilon}{2}|u_1(x)|^2 + \dfrac{C_{\varepsilon}}{p}|u_1(x)|^p\right)dx \nonumber\\
		&=& \rho^2\left(\dfrac{1}{2}\big(1-\varepsilon h_{\infty}C_2^2\big) - \dfrac{C_{\varepsilon}}{p}h_{\infty}C_p^p\rho^{p-2}\right).
	\end{eqnarray}
	Thus, if $\varepsilon, \rho$ are sufficiently small, from (\ref{l3e1}) it holds $I(u_1) \geq \alpha>0.$
	
	\qquad Now, for the purpose of checking that $I|_{\partial Q}\leq 0 < \alpha, $ consider the three cases as follows:\\
	\textbf{Case i.} \ $ u \in Q_1\subset E_2$, thus
	\[
	I(u)= -\dfrac{1}{2}||u||^2-\int_{\mathbb{R}^N}h(x)F(u(x))dx \leq0,
	\]
	since $h(x)F(u(x))\geq0$, for all $x \in \mathbb{R}^N.$\\
	\textbf{Case ii.} \ $u \in Q_2$, thus $u=u_1+u_2,$ where $u_1= re$, with $0\leq ||u_1||= r \leq r_1$ and $||u_2||=r_2>r_1$, therefore
	\[
	I(u)= \dfrac{1}{2}\left(||u_1||^2-r_2^2\right)-\int_{\mathbb{R}^N}h(x)F(u(x))dx \leq \dfrac{1}{2}\left(r_1^2-r_2^2\right)<0.
	\]
	\textbf{Case iii.} \ $u \in Q_3$, thus $u=r_1e+u_2$, where $0\leq ||u_2||\leq r_2.$ If $r_1\leq ||u_2||\leq r_2$, then
	\[
	I(u)=\dfrac{1}{2}\left(r_1^2-||u_2||^2\right)-\int_{\mathbb{R}^N}h(x)F(u(x))dx \leq \dfrac{1}{2}\left(r_1^2-r_1^2\right)\leq0.
	\]
	If $0\leq||u_2||<r_1$, put $u_2=r_1v_2$, where $v_2 \in B_1\cap E_2.$ Thus,
	\begin{eqnarray}\label{c3e1}
	I(u)&=& \dfrac{1}{2}r_1^2\left(1-||v_2||^2\right)-\int_{\mathbb{R}^N}h(x)F(u(x))dx \nonumber\\
	&\leq& \dfrac{1}{2}r_1^2\left(1 - \int_{\mathbb{R}^N}2h(x)\dfrac{F\big(r_1(e(x)+v_2(x))\big)}{r_1^2}dx\right).
	\end{eqnarray}
	\textbf{Claim.} \textit{ The limit
		\[\lim_{r_1 \to +\infty}\int_{\mathbb{R}^N}2h(x)\dfrac{F\big(r_1(e(x)+v_2(x))\big)}{r_1^2}dx = a\int_{\mathbb{R}^N}h(x)\big[e(x)+ v_2(x)\big]^2dx,
		\]
		is uniform for $v_2 \in B_1\cap E_2$.}\\
	
	\qquad Assume that claim postponing its proof, in order to conclude case $iii$. From uniform convergence in $B_1 \cap E_2$, for each $\varepsilon>0$ there exists $r_0>0$ such that, for all $r_1\geq r_0$
		\[
		\int_{\mathbb{R}^N}\left[a-2\dfrac{F\big(r_1(e(x)+v_2(x))\big)}{r_1^2(e(x)+v_2(x))^2}\right]h(x)\big(e(x)+v_2(x)\big)^2dx< \varepsilon\int_{\mathbb{R}^N}h(x)\big(e(x)+v_2(x)\big)^2dx,
		\]
	for all $v_2 \in B_1 \cap E_2.$ Thus,
	\[
	\dfrac{1}{2}r_1^2\left(1 - \int_{\mathbb{R}^N}2h(x)\dfrac{F\big(r_1(e(x)+v_2(x))\big)}{r_1^2}dx\right)< \dfrac{1}{2}r_1^2\left(1 - a_{\varepsilon}\int_{\mathbb{R}^N}h(x)e^2(x)dx\right),
	\]
	where  $a_{\varepsilon}=(a-\varepsilon)$. Substituting in (\ref{c3e1}), it yields
	\begin{equation}\label{c3e3}
	I(u)<\dfrac{1}{2}r_1^2\left(1 - a_{\varepsilon}\int_{\mathbb{R}^N}h(x)e^2(x)dx\right).
	\end{equation}
	Therefore, from (\ref{e}) and (\ref{c3e3}) it follows that
	$I(u)<0$, and the result holds.
	
	\qquad Now, we prove the claim. In order to do so, define for all $n \in \mathbb{N}$, the functional $J_n: B_1\cap E_2 \to \mathbb{R}$ given by
	\[
	J_n(v_2):= \int_{\mathbb{R}^N}\left[a-2\dfrac{F\big(n(e(x)+v_2(x))\big)}{n^2(e(x)+v_2(x))^2}\right]h(x)\big(e(x)+v_2(x)\big)^2dx.
	\]
	The continuity of $F$ implies that $J_n$ is continuous for all $n \in \mathbb{N}$. From $(f_2)$ and by the equivalence of $H^1(\mathbb{R}^N)$ and $E$ norms,
	\[
	0\leq J_n(v_2)\leq ah_{\infty}\Big(||e||^2_2 + ||v_2||^2_2\Big)\leq 2C_2^2ah_{\infty},
	\]
	for all $v_2 \in B_1 \cap E_2$. Then, seeing that $E_2$ is finite dimensional$, B_1 \cap E_2$ is compact, and since $J_n$ is continuous in $B_1 \cap E_2$, it attains a maximum value, denoted by $u_n \in B_1 \cap E_2$. Considering this sequence of maximums $(u_n)$, and provided that $||u_n||\leq 1$ for all $n \in\mathbb{N}$, the sequence is bounded. Again, provided that $E_2$ is finite dimensional such a sequence converges, up to subsequences, in $B_1\cap E_2$, namely, $u_n \to u$ in $E-$norm. Moreover, for all $v_2 \in B_1 \cap E_2$, for all $n \in \mathbb{N}$, $0\leq J_n(v_2)\leq J_n(u_n)$ holds, that is
	\begin{eqnarray}\label{l2e1}
	0&\leq& \int_{\mathbb{R}^N}\left[a-2\dfrac{F\big(n(e(x)+v_2(x))\big)}{n^2(e(x)+v_2(x))^2}\right]h(x)\big(e(x)+v_2(x)\big)^2dx \nonumber\\ &\leq& \int_{\mathbb{R}^N}\left[a-2\dfrac{F\big(n(e(x)+u_n(x))\big)}{n^2(e(x)+u_n(x))^2}\right]h(x)\big(e(x)+u_n(x)\big)^2dx.
	\end{eqnarray}
	Now, note that $u_n(x)\to u(x)$ almost everywhere in $\mathbb{R}^N$, then from $(f_2)$
	\[
	\left[a-2\dfrac{F\big(n(e(x)+u_n(x))\big)}{n^2(e(x)+u_n(x))^2}\right]h(x)\big(e(x)+u_n(x)\big)^2 \to 0,
	\]
	almost everywhere in $\mathbb{R}^N$ as $n \to +\infty$. More than this, since $u_n \to u$ in $E$, $u_n \to u$ in $L^2(\mathbb{R}^N)$,  and so by Lebesgue Dominated Convergence Theorem, it follows
	\[
	\lim_{n \to +\infty}\int_{\mathbb{R}^N}\left[a-2\dfrac{F\big(n(e(x)+u_n(x))\big)}{n^2(e(x)+u_n(x))^2}\right]h(x)\big(e(x)+u_n(x)\big)^2dx=0,
	\]
	Now, applying in (\ref{l2e1}) the limit as $n \to +\infty$, the Sandwich Theorem  yields
	\[
	\lim_{n \to +\infty}\int_{\mathbb{R}^N}\left[a-2\dfrac{F\big(n(e(x)+v_2(x))\big)}{n^2(e(x)+v_2(x))^2}\right]h(x)\big(e(x)+v_2(x)\big)^2dx=0,
	\]
	uniformly for all $v_2 \in B_1\cap E_2$ and the claim is proved.
\end{proof}
\qquad For now, observe that by $(f_1)$ and $(f_2)$, given $\varepsilon>0$ and $2< p < 2^*$ there exists a constant $C_{\varepsilon}>0$ such that 
\begin{equation}\label{F}
|F(s)| \leq \dfrac{\varepsilon}{2}|s|^2
+ \dfrac{C_{\varepsilon}}{p}|s|^p
\end{equation}
and
\begin{equation}\label{f}
|f(s)|\leq\varepsilon|s|+ C_{\varepsilon}|s|^{p-1}
\end{equation}
for all $s \in \mathbb{R}.$

\qquad In order to \ verify \ $(I_4)$ it is necessary to ensure the boundedness of Cerami sequences for $I$. Next lemma gives this result.

\begin{lemma}\label{l1}
	Assume that $(V_1)-(V_2), (h_1)$ and $(f_1)-(f_3)$ hold for $I$ and let  $(u_n) \subset E$ be a Cerami sequence of $I$ on an arbitrary level $c \in \mathbb{R}$, then $(u_n)$ is bounded.
\end{lemma}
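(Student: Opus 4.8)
The plan is to argue by contradiction: suppose $(u_n)$ is a Cerami sequence at some level $c$ with $\|u_n\|\to+\infty$ along a subsequence, and derive a contradiction. The starting point is the identity obtained by combining $I(u_n)$ with $\tfrac12 I'(u_n)u_n$. Using the decomposition $I(u)=\tfrac12(\|u^+\|^2-\|u^-\|^2)-\int h F(u)$ and $I'(u)u=\|u^+\|^2-\|u^-\|^2-\int h f(u)u$, one gets
\[
\int_{\mathbb{R}^N}h(x)\,Q(u_n(x))\,dx=I(u_n)-\tfrac12 I'(u_n)u_n,
\]
with $Q(s)=\tfrac12 f(s)s-F(s)$ as in $(f_3)$. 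Since $|I'(u_n)u_n|\le\|I'(u_n)\|\,\|u_n\|\to0$ by the Cerami condition and $I(u_n)\to c$, the left-hand side converges to $c$; in particular it stays bounded, and by $(f_3)$ and $(h_1)$ the integrand $h\,Q(u_n)$ is nonnegative.

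Next I would normalise, setting $w_n:=u_n/\|u_n\|$, so $\|w_n\|=1$; passing to a subsequence, $w_n\rightharpoonup w$ in $E$, with $w_n\to w$ a.e. in $\mathbb{R}^N$. If $w\not=0$, then on the positive-measure set $\Omega=\{x:w(x)\not=0\}$ one has $|u_n(x)|=\|u_n\|\,|w_n(x)|\to+\infty$ a.e., so by the coercivity of $Q$ at infinity in $(f_3)$, $Q(u_n(x))\to+\infty$ a.e. on $\Omega$. Since $h\,Q(u_n)\ge0$, Fatou's Lemma yields
\[
\liminf_{n\to+\infty}\int_{\mathbb{R}^N}h\,Q(u_n)\ge\int_{\Omega}h\,\liminf_{n\to+\infty}Q(u_n)=+\infty,
\]
contradicting the boundedness established above. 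Hence necessarily $w=0$.

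It remains to treat the case $w=0$, where the goal is to show $\|w_n\|\to0$, contradicting $\|w_n\|=1$. Testing $I'(u_n)$ against $u_n^+\in E_1=E^+$ and dividing by $\|u_n\|^2$, and using $|I'(u_n)u_n^+|\le\|I'(u_n)\|\,\|u_n\|\to0$, one obtains
\[
\|w_n^+\|^2=\int_{\mathbb{R}^N}h\,\frac{f(u_n)}{u_n}\,w_n\,w_n^+\,dx+o(1).
\]
Here $|f(u_n)/u_n|\le\kappa$ by Remark \ref{s2r2}, so by Cauchy--Schwarz the integral is bounded by $\kappa\big(\int h\,w_n^2\big)^{1/2}\big(\int h\,(w_n^+)^2\big)^{1/2}$, and $\int h\,(w_n^+)^2\le a_0^{-1}\|w_n^+\|^2\le a_0^{-1}$ by the definition of $a_0$ in \eqref{a_0}. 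The crucial estimate is $\int h\,w_n^2\to0$: splitting $\mathbb{R}^N=B_R\cup B_R^c$, the tail $\int_{B_R^c}h\,w_n^2\le\|h\|_{L^q(B_R^c)}\|w_n\|_{L^{2q'}}^2$ is uniformly small for $R$ large since $h\in L^q$ and $2q'=2\cdot2^*/p\in(2,2^*)$, while on $B_R$ the compact embedding $H^1(B_R)\hookrightarrow\hookrightarrow L^{2q'}(B_R)$ together with $w_n\rightharpoonup0$ forces $\int_{B_R}h\,w_n^2\to0$. This gives $\|w_n^+\|\to0$. Finally, since $E_2=E^-\oplus E^0$ is finite-dimensional (a consequence of $(V_1)$--$(V_2)$, see Remark \ref{s2r1}) and $w_n\rightharpoonup0$, the components $w_n^-$ and $w_n^0$ converge strongly to $0$; thus $\|w_n\|^2=\|w_n^+\|^2+\|w_n^-\|^2+\|w_n^0\|^2\to0$, the desired contradiction.

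The main obstacle is the case $w=0$, and within it the verification that the weighted integral $\int h\,w_n^2$ vanishes. This is precisely where the integrability assumption $h\in L^q$ with $q=2^*/(2^*-p)$ is indispensable: it supplies the compactness that plain weak $H^1$-convergence lacks on the unbounded domain $\mathbb{R}^N$, letting the tail of the integral be controlled uniformly in $n$. The finite-dimensionality of $E_2$ is what allows the negative and null components to be dispatched at once, and the case $w\not=0$ is the direct payoff of hypothesis $(f_3)$.
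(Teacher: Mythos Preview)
Your proof is correct. The overall skeleton --- argue by contradiction, normalise to $w_n=u_n/\|u_n\|$, split into $w\neq0$ and $w=0$, and use the identity $I(u_n)-\tfrac12 I'(u_n)u_n=\int hQ(u_n)$ together with Fatou and $(f_3)$ in the first case --- is exactly the paper's.

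In the case $w=0$, however, the paper proceeds differently. Rather than testing only against $u_n^+$ and then invoking the finite-dimensionality of all of $E_2=E^-\oplus E^0$ to kill $w_n^-$ and $w_n^0$, the paper tests against $u_{+,n}-u_{-,n}$ simultaneously. This yields
\[
1-\|v_{0,n}\|^2-\int_{\mathbb{R}^N}h(x)\,\frac{f(u_n)}{u_n}\bigl(v_{+,n}^2-v_{-,n}^2\bigr)\,dx=o(1),
\]
so the integral tends to $1$ (only $\dim E^0<\infty$ is used, to get $\|v_{0,n}\|\to0$). On the other hand the integrand $\phi_n:=\tfrac{f(u_n)}{u_n}(v_{+,n}^2-v_{-,n}^2)$ is bounded in $L^{2^*/p}$ and converges to $0$ a.e., hence weakly in $L^{2^*/p}$; pairing with $h\in L^q$ forces the integral to $0$, a contradiction. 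Your splitting $\mathbb{R}^N=B_R\cup B_R^c$ with the $L^q$-tail estimate is a perfectly valid and more hands-on way to exploit $h\in L^q$; the paper's weak-convergence argument is slicker but relies on the same integrability. The practical difference is portability: the paper's version needs only $\dim E^0<\infty$, so it is reused verbatim for the second application (periodic $V$, where $E^-$ is infinite-dimensional), whereas your argument, as written, would require modification there since it leans on $\dim E_2<\infty$.
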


\begin{proof}
	Suppose by contradiction that $||u_n||\to +\infty$ as $n \to +\infty$, up to subsequences. Defining $v_n:=\dfrac{u_n}{||u_n||}$ it follows that $(v_n)$ is a bounded sequence in $E$. Then $v_n \rightharpoonup v$ as $n \to +\infty,$  up to subsequences. Let us show that neither $v=0$, nor $v\not=0$ can occur.
	
	\qquad First, suppose that $v\not=0$, it means there exists $\Omega \subset \mathbb{R}^N$ such that $|\Omega|>0$ and $v(x)\not=0$ for all $x \in \Omega$. Since $v_n(x) \to v(x)$ almost everywhere in $\Omega$, one conclude that $|u_n(x)|\to +\infty$,  almost everywhere for $x \in \Omega$. Hence, in view of $(f_3)$ one arrives at $h(x)Q(u_n(x)) \to +\infty$ as $n \to +\infty$, almost everywhere in $\Omega$. Applying Fatou's Lemma, one obtains
\begin{equation}\label{be1}
	\liminf_{n \to +\infty}\int_{\Omega}h(x)Q(u_n(x))\;dx \geq \int_{\Omega}	\liminf_{n \to +\infty}h(x)Q(u_n(x))\;dx = +\infty.
\end{equation}

	\qquad Provided that $(u_n)$ is a Cerami sequence on level $c$, it follows that
	\begin{equation}\label{be2}
c + o_n(1)  = I(u_n) - \dfrac{1}{2}I'(u_n)u_n = \int_{\mathbb{R}^N}h(x)Q(u_n(x))\;dx \geq \int_{\Omega}h(x)Q(u_n(x))\;dx.
	\end{equation}
Combining (\ref{be1}) and (\ref{be2}) it yields a contradiction. Therefore, one must have $v=0$.
	
	\qquad Setting $v_n = v_{+,n} + v_{-,n}+ v_{0,n}$, where $v_{i,n} \in E^j, j= +,-,0$, up to subsequences it yields
	\begin{eqnarray}\label{be3}
	{v}_n\rightharpoonup{v} = {v}^+ + {v}^- + {v}^0 &\text{ \ in \ }& E= E^++E^-+E^0,\nonumber\\ {v}_n \to {v} &\text{ \ in \ }& L^2_{loc}(\mathbb{R}^N).
	\end{eqnarray}
Since $v = 0$, hence $v^+ = v^- = v^0=0$ and from (\ref{be3}) one has $v_{j,n}(x) \to 0$ almost everywhere in $\mathbb{R}^N$, moreover, $v_{0,n}\to 0$ in $E$, provided that $E^0$ is finite dimensional. In addition, $(u_n)$ is a Cerami sequence, hence $I'(u_n)u_{+,n} \to 0$ and  $I'(u_n)u_{-,n} \to 0$ as $n \to +\infty.$ Therefore,

\begin{eqnarray*}
	o_n(1)&=& \dfrac{I'(u_n)u_{+,n}}{||u_n||^2} - \dfrac{I'(u_n)u_{-,n}}{||u_n||^2}\nonumber\\
	&=&
	||v_{+,n}||^2+||v_{-,n}||^2 - \int_{\mathbb{R}^N}h(x)\left[\dfrac{f(u_n(x))}{u_n(x)}v_n(x)\Big(v_{+,n}(x)-v_{-,n}(x)\Big)\right]dx\\
	&=&1- ||v_{0,n}||^2 -\int_{\mathbb{R}^N}h(x)\left[\dfrac{f(u_n(x))}{u_n(x)}\Big(v_{+,n}^2(x)-v_{-,n}^2(x)\Big)\right]dx,
\end{eqnarray*}
which implies that, 
\begin{equation} \label{be4}
\int_{\mathbb{R}^N}h(x)\left[\dfrac{f(u_n(x))}{u_n(x)}\Big(v_{+,n}^2(x)-v_{-,n}^2(x)\Big)\right]dx \to 1 - ||v^{0}||^2 = 1,
\end{equation}
as \ $n \to +\infty.$ \ However, \ since \ $\left|\dfrac{f(s)}{s}\right|\leq \kappa$, for \ all \ $s \in \mathbb{R}\backslash \{0\}$ \ and \ provided \ that $h \in L^{q}(\mathbb{R}^N)$, \ with \ ${q =\frac{2^*}{2^*-p}}$ \ and \ $2<2\dfrac{2^*}{p}<2^*$, Holder's \ inequality \ ensures \ that ${\phi_n(x):= \dfrac{f(u_n(x))}{u_n(x)}\Big(v_{+,n}^2(x)-v_{-,n}^2(x)\Big) \in L^{^\frac{2^*}{p}}(\mathbb{R}^N)}$ for all $n$ and $(\phi_n)$ is a bounded sequence in $L^{^\frac{2^*}{p}}(\mathbb{R}^N)$, since $(v_n)$ is bounded in $E$. Furthermore, in view of (\ref{be3}) one has $\phi_n(x)\to 0$ almost everywhere in $\mathbb{R}^N$, thus, it yields that $(\phi_n)$ converges weakly to $0$ in $L^{^\frac{2^*}{p}}(\mathbb{R}^N)$, up to subsequences. 

\qquad Recalling that $h \in L^{q}(\mathbb{R}^N)$, with $q =\frac{2^*}{2^*-p}$, the weak convergence implies that
\begin{equation}\label{be5}
\int_{\mathbb{R}^N}h(x)\left[\dfrac{f(u_n(x))}{u_n(x)}\Big(v_{+,n}^2(x)-v_{-,n}^2(x)\Big)\right]dx \to 0,
\end{equation}
as $n \to +\infty$. Looking at (\ref{be4}) and (\ref{be5}), one arrives at a contradiction.
\end{proof}

 \qquad In view of last result, to obtain $(I_4)$ we fix \  $b>0$ \ and \  take \ $(u_n)$  \ such \ that \ $I(u_n) \subset [c-b,c+b]$ \ and $||I'(u_n)||\big(1+||u_n||\big) \to 0$ as $n\to +\infty.$ Supposing that $(u_n)$ is unbounded we take $(u_{n_k})\subset (u_n)$ such that $||u_{n_k}||\to +\infty$ as $k\to\infty.$ Seeing that $I(u_{n_k})\subset[c-b,c+b]$ is bounded in $\mathbb{R}$, it implies that $I(u_{n_k}) \to d$, up to subsequences. Then, $(u_{n_k})$ is a Cerami sequence on level $d$, up to subsequences, hence $(u_{n_k})$ is bounded up to subsequences, by Lemma \ref{l1}. However, it yields a contradiction, since $||u_{n_k}||\to +\infty$ as $k \to +\infty.$ Thus, $(u_n)$ is bounded.

\qquad Now, after all theses results, one is finally ready to prove Theorem \ref{t1}.\\

\begin{proof}[Proof of Theorem \ref{t1}.]
In view of all assumptions $I$ satisfies $(I_1),(I_2),(I_3)$ and $(I_4)$ in Theorem \ref{ALT}, so it is possible to apply  this theorem for $I$. Theorem \ref{ALT} provides  a $c\geq \alpha >\omega=0,$ critical value of $I$. Therefore, there exists $u \in E$ such that $I(u)=c>0$ and $I'(u)=0$, hence, $u\not=0$, since $I(u)>0$. Provided that $I\in C^1(E,\mathbb{R})$, it follows that $u$ is a nontrivial weak solution to $(P)$ in $H^1(\mathbb{R}^N).$ 
\end{proof}

\begin{Remark} \label{s2r3}
	Note that problem $(P)$ in (\ref{P}), has just been solved with conditions on the potential $V$, which ensures a spectrum with negative and positive parts. Such conditions implied that the subspace $E^-$, corresponding to negative spectrum, was finite dimensional (see Remark \ref{s2r1})). Although the fact of $E_2=E^-\oplus E^0$ being finite dimensional was not necessary to apply Theorem \ref{t1}, this information was used to obtain the linking geometry (see the proof of claim in Lemma \ref{l3}). However, with minor changes it is possible to prove the linking geometry indirectly, without the assumption that $\dim E_2$ is finite.
\end{Remark}
\qquad Since Theorem \ref{ALT} does not require that any subspace in the linking decomposition needs to be finite dimensional, the main goal now  is to work with the same problem, but assuming conditions on $V$ which gives both subspaces in the linking decomposition being infinite dimensional. Henceforth, consider problem $(P)$, but replacing the condition $(V_1)$ on $V$ by the condition $(V_1')$,
and also assuming the condition $(V_2).$ The assumptions on $h$ and $f$ are the same as before.

\begin{Remark}\label{s2r4}
	In view of  $(V_1')$,  $V$ is  periodic and continuous,  hence bounded. In addition,  by \cite{RS4} (see page 309, Theorem XIII.100) 
	the  spectral theory 
	asserts  that operator $A$ has pure  absolutely  continuous  spectrum,  which is  bounded from below  and consists of  closed  disjoint 
	intervals.  Namely, $\sigma(A)=\sigma_{ess}(A)=\sigma_{ac}(A)=\cup[\alpha_i,\beta_i]$. In view of $(V_2)$, operator $A$ has nonempty negative and positive spectra, and $0$ is in the gap between them. Indeed, if $0\in\sigma(A)$ it would be an isolated point of the spectrum, which contradicts the fact that operator $A$ has pure continuous spectrum. Moreover, since  $\sigma(A)=\sigma_{ess}(A)$ the negative and positive spectrum are both part of essential spectrum. Thus, if $\{\mathcal{E}(\lambda)\}$ is the spectral family of operator $A$, by spectral theory and essential spectrum definition (see \cite{BS} and \cite{RS4}), the subspaces associated with negative and positive spectrum, namely $\mathcal{E}(0)(H^1(\mathbb{R}^N))$ and $(I-\mathcal{E}(0))(H^1(\mathbb{R}^N))$ are both infinite dimensional.
\end{Remark}

\hspace{0,75cm}Setting $E_1:=(I-\mathcal{E}(0))(H^1(\mathbb{R}^N)) $ and $E_2:= \mathcal{E}(0)(H^1(\mathbb{R}^N))$, these subspaces are such that operator $A$ is positive definite in the former and negative definite in the latter. Indeed, by the spectral family definition (see \cite{BS}, Theorem 1.1', page 394; see also \cite{P} Chapter 3),
for all $u_1 \in E_1$  and  for all $u_2 \in E_2$ it yields 
\[
\sigma^+||u_1||^2_2\leq \int_{\mathbb{R}^N}\Big(|\nabla u_1(x)|^2+V(x)u^2_1(x)\Big)dx
\]
and
\[
-\sigma^-||u_2||^2_2\leq -\int_{\mathbb{R}^N}\Big(|\nabla u_2(x)|^2+V(x)u^2_2(x)\Big)dx.
\]
Because of this, it is possible to proceed similarly to before and consider the norm induced by operator $A$:
\[
||u||^2 = \left\{
\begin{array}{lll}
\ \ \ \displaystyle\int_{\mathbb{R}^N}\Big(|\nabla u|^2 + V(x)u^2\Big)dx = (Au,u)_2, \ \ \ \ \ \text{if} \ \ u \in E_1, \\
\\
-\displaystyle\int_{\mathbb{R}^N}\Big(|\nabla u|^2 + V(x)u^2\Big)dx=-(Au,u)_2, \ \ \ \text{if} \ \ u \in E_2,
\end{array}
\right.
\]
which is equivalent to the usual norm in $H^1(\mathbb{R}^N)$. Denote  $E=(H^1(\mathbb{R}^N), ||\cdot||)$, then $E=E_1\oplus E_2$, namely every $u \in E$ can be uniquely written as $u=u_1+u_2$, with $u_i \in E_i$ and
\[
||u||^2 = ||u_1||^2+||u_2||^2 = (Au_1,u_1) - (Au_2,u_2).
\]
Here the conclusions from Remark \ref{s2r2} also hold, and again the functional $I:E\to \mathbb{R}$ associated with $(P)$ is written as
\[
I(u)= \dfrac{1}{2}\Big(||u_1||^2 - ||u_2||^2\Big) - \int_{\mathbb{R}^N}h(x)F(u(x))dx,
\]
for all $u=u_1 + u_2 \in E.$\\

\qquad Now it is possible to state the second main result of this section.

\begin{theorem}\label{t2}
	Let consider problem $(P)$ with $V$ satisfying $(V_1')-(V_2)$, $h$ satisfying \ $(h_1)$ \ and $f$ satisfying $(f_1)-(f_3)$, \ with $a>a_0$. \ Then $(P)$ has \ a \ nontrivial \ weak \ solution \ $u \in H^1(\mathbb{R}^N)$.
\end{theorem}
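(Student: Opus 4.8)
The plan is to verify that the energy functional $I$ in the periodic setting satisfies hypotheses $(I_1)$–$(I_4)$ of Theorem \ref{ALT} and then invoke that theorem, exactly as in the proof of Theorem \ref{t1}. For $(I_1)$ I would use the spectral splitting $E = E_1 \oplus E_2$ provided by $(V_1')$–$(V_2)$, set $L_1 = \mathrm{Id}_{E_1}$ and $L_2 = -\mathrm{Id}_{E_2}$, and note these are bounded, linear and self-adjoint; here $E^0 = \ker A = \{0\}$, since $0$ lies in a gap of the purely continuous spectrum (Remark \ref{s2r4}), so the decomposition is in fact simpler than before. Hypothesis $(I_2)$ is precisely Lemma \ref{l2}, whose proof invokes only $(h_1)$ and $(f_1)$–$(f_2)$ and is therefore unchanged. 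For $(I_4)$ I would reuse Lemma \ref{l1}: inspecting its proof, the boundedness of Cerami sequences rests only on the abstract splitting $E = E^+ \oplus E^- \oplus E^0$, on $(f_3)$ through Fatou's Lemma, and on the weak-to-strong passage $\phi_n \rightharpoonup 0$ in $L^{2^*/p}$ tested against $h \in L^q$; none of this uses $(V_1)$, and the pointwise convergence $v_n(x) \to 0$ still follows from Rellich's theorem, so $(I_4)$ carries over (again with $E^0 = \{0\}$).

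The only genuinely new point is $(I_3)$. I would keep the same sets $S = \partial B_\rho \cap E_1$ and $Q = \{re : r \in [0,r_1]\} \oplus (E_2 \cap B_{r_2})$, and pick $e \in E_1$ with $\|e\| = 1$ as a near-minimizer of the quotient defining $a_0$, so that $(\ref{e})$ holds. Then $(I_3)(i)$ is proved exactly as in Lemma \ref{l3}, since that estimate lives entirely on $E_1$ and uses only $(\ref{F})$ and the Sobolev embedding, while $(I_3)(iii)$ is Lemma \ref{LE}, which is purely topological. Likewise Cases i and ii of $I|_{\partial Q} \le 0$ rely only on the quadratic part together with $hF \ge 0$, so they persist verbatim. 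The obstruction is Case iii, with $u = r_1(e + v_2)$, $v_2 \in E_2$, $\|v_2\| < 1$: the finite-dimensional proof established the uniform limit of $\int 2h\,F\big(r_1(e+v_2)\big)/r_1^2$ by exploiting compactness of $B_1 \cap E_2$, which is exactly what is lost once $E_2$ is infinite dimensional.

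To handle Case iii without compactness I would argue indirectly. Suppose $I|_{Q_3} \le 0$ fails for every large $r_1$; then there exist $r_n \to +\infty$ and $v_n \in E_2$ with $\|v_n\| < 1$ and $I(r_n(e+v_n)) > 0$, that is
\[
\int_{\mathbb{R}^N} h(x)\,\frac{F\big(r_n(e+v_n)\big)}{r_n^2}\,dx < \tfrac12\big(1 - \|v_n\|^2\big).
\]
Passing to a subsequence, $v_n \rightharpoonup \bar v$ in $E_2$ and $v_n \to \bar v$ almost everywhere by Rellich. Since the integrand is nonnegative and $(f_2)$ gives $F(s)/s^2 \to a/2$, Fatou's Lemma yields $\liminf_n \int h\,F\big(r_n(e+v_n)\big)/r_n^2 \ge \tfrac{a}{2}\int_{\mathbb{R}^N} h(e+\bar v)^2\,dx$, while the bound $\|\bar v\|^2 \le \liminf_n \|v_n\|^2$ controls the right-hand side in the limit; letting $n \to \infty$ I would obtain
\[
a\int_{\mathbb{R}^N} h(x)\,(e+\bar v)^2\,dx + \|\bar v\|^2 \le \|e\|^2 = 1 .
\]
The hard part will be to contradict this last inequality. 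Equivalently, one must show that the indefinite quadratic form $u \mapsto \|u_1\|^2 - \|u_2\|^2 - a\int h u^2$ stays negative on the entire affine subspace $e + E_2$. In finite dimensions this was automatic from compactness, but in the present setting it is a genuine spectral statement that must be extracted from $a > a_0$, the variational meaning of $a_0 = \inf_{u_1 \in E_1} \|u_1\|^2 \big/ \int h u_1^2$, and the spectral gap about $0$. I expect the decisive device to be a judicious choice of $e$ keeping the weighted cross term $\int h\,e\,\bar v$ under control (ideally forcing $he$ to be $L^2$-orthogonal to $E_2$, which would give $\int h(e+\bar v)^2 \ge \int h e^2$), combined with the coercivity $\|u_1\|^2 - \|u_2\|^2 - a\int h u^2 \to -\infty$ as $\|\bar v\| \to \infty$, which confines any maximizer to a bounded set and reduces the matter to a single optimality identity. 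Establishing this negativity is where the argument is most delicate, and it is precisely the step that the passage from the asymptotically constant potential $(V_1)$ to the periodic potential $(V_1')$ renders nontrivial.
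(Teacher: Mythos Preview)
Your handling of $(I_1)$, $(I_2)$ and $(I_4)$ is correct and matches the paper exactly: $(I_1)$ with $L_i=\pm\mathrm{Id}_{E_i}$ (the paper also notes $E^0=\{0\}$ in this setting), $(I_2)$ via Lemma~\ref{l2} unchanged, and $(I_4)$ via Lemma~\ref{l1}, whose proof indeed never invokes $(V_1)$.

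The genuine gap is precisely where you locate it. You reach
\[
a\int_{\mathbb{R}^N} h(x)\,(e+\bar v)^2\,dx + \|\bar v\|^2 \le 1
\]
and then candidly say you do not know how to contradict it. Your proposed devices---choosing $e$ so that $he\perp E_2$ in $L^2$, or an abstract coercivity reduction---are not furnished by the hypotheses and will not close the argument: the cross term $2a\int h e\,\bar v$ can be negative, and the bare strict inequality $a\int he^2>1$ does not by itself force the left side above $1$. So as written the proposal stops short of $(I_3)$.

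The paper resolves $(I_3)$ differently on two points. First, it changes $Q$ to the half-ball $Q=\{re+u_2: r\ge 0,\ u_2\in E_2,\ \|re+u_2\|\le r_1\}$ and runs a single indirect argument on all of $\partial Q$: assuming $I(r_ne+u_n)>0$ along a sequence with $\|r_ne+u_n\|\to\infty$, it normalises by $\|r_ne+u_n\|$ to obtain $v_n=s_ne+w_n$ with $s_n^2+\|w_n\|^2=1$, whence $s_n^2>\tfrac12$ and $s>0$. Second---and this is the device that replaces the finite-dimensional compactness of $B_1\cap E_2$ you correctly flag as lost---the paper does not work on $\mathbb{R}^N$ with Fatou. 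Since $a\int_{\mathbb{R}^N} he^2>1$ strictly, one may fix a \emph{bounded} domain $\Omega_0$ on which $a\int_{\Omega_0} he^2>1$ still holds. On $\Omega_0$ the compact Sobolev embedding yields $v_n\to se+w$ strongly in $L^2(\Omega_0)$, so the nonlinear term passes to the limit by dominated convergence rather than Fatou, and the limiting inequality can be confronted directly with the strict inequality $a\int_{\Omega_0}he^2>1$ to produce the contradiction (Lemma~\ref{l5}). Localising to $\Omega_0$ is the missing idea you should incorporate in place of your global Fatou step.
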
	

\qquad Under the purpose of applying Theorem \ref{ALT} to solve this problem, proceeding as before it is necessary to show that this problem satisfies all required assumptions of the Abstract Linking Theorem. 

\qquad Observe that as before $I \in C^1(E,\mathbb{R})$, due to the hypotheses assumed about $h$ and $f$.  Moreover, on one hand,
\[
\big(Lu,u\big)= \big(L_1u_1 + L_2u_2, u_1 +u_2\big)= \big(L_1u_1,u_1\big)+\big(L_2u_2,u_2\big),
\]
and on the other hand, denoting by $I_i:E_i\to E_i$ the identity operator in $E_i$ for $i=1,2$, note that 

\[||u_1||^2 - ||u_2||^2 = \big(u_1,u_1\big) - \big(u_2,u_2\big) = \big(I_1(u_1),u_1\big)+\big(-I_2(u_2),u_2\big).
\]
Thus, setting $L_i := I_i$ for $i=1,2$, it follows that $L_i:E_i\to E_i$ are bounded, linear and self-adjoint operators, for $i=1,2$. Thus, $I$ satisfies $(I_1)$ in Theorem \ref{ALT} as before. 

\qquad Furthermore, all assumptions are kept on $h$ and $f$, then $(I_2)$ and $(I_4)$ also hold here with the same proofs as before. Therefore, it is only necessary to show the linking geometry in $(I_3)$, which will have a different proof in this case, provided that $E_2$ is infinite dimensional. First of all, set $S:= \partial B_{\rho}\cap E_1$ and ${Q= \{re+u_2: r\geq0, u_2 \in E_2, ||re+u_2||\leq r_1\}}$, where $0<\rho < r_1$ are constants and $e\in E_1, ||e||=1$, is chosen as before. In fact, assuming $a>a_0$, by the definition of $a_0$, there exists some unitary $e \in E_1$ such that
\begin{equation}\label{l5e2}
a_0 \displaystyle\int_{\mathbb{R}^N}h(x)e^2(x)dx\leq||e||^2 =1 < a\displaystyle\int_{\mathbb{R}^N}h(x)e^2(x)dx.
\end{equation}
Such $e$ makes the following lemma true. Moreover, it is possible to show that $S$ and $Q$ \ ``link'' following the same lines as in Lemma \ref{LE}. Next lemma shows the linking geometry $(I_3) \ (i)-(ii)$ of Theorem \ref{ALT}, for some $\alpha>0$, $ \omega=0$ and arbitrary $v \in E_2$.

\begin{lemma}\label{l5}
	Assume that $(V_1')-(V_2)$, $(h_1)$ and $(f_1)-(f_2)$ hold for $I$. For $Q$ and $S$ as above, and for sufficiently large $r_1>0$, it follows that $I|_S\geq \alpha >0$ and $I|_{\partial Q}\leq 0$, for some $\alpha>0.$ 
\end{lemma}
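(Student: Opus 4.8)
The plan is to verify the two inequalities in Lemma \ref{l5} separately, following the architecture of Lemma \ref{l3} but replacing the compactness of $B_1\cap E_2$ (no longer available, since now $\dim E_2=\infty$) by the indirect argument announced in Remark \ref{s2r3}. First I would record the geometry of $\partial Q$: since $e\in E_1$ and $E_1\perp E_2$, one has $\|re+u_2\|^2=r^2+\|u_2\|^2$, so $\partial Q$ splits into the flat face $D=\{u_2\in E_2:\|u_2\|\le r_1\}$ (where $r=0$) and the spherical cap $\{re+u_2:\ r\ge 0,\ r^2+\|u_2\|^2=r_1^2\}$. The estimate $I|_S\ge\alpha>0$ is then \emph{identical} to the one in Lemma \ref{l3}: for $u_1\in S$ one uses \eqref{F}, the bound $h\le h_\infty$ and the Sobolev embeddings $\|u_1\|_2\le C_2\|u_1\|$, $\|u_1\|_p\le C_p\|u_1\|$ to get $I(u_1)\ge\rho^2\big[\tfrac12(1-\varepsilon h_\infty C_2^2)-\tfrac{C_\varepsilon}{p}h_\infty C_p^p\rho^{p-2}\big]$, which is $\ge\alpha>0$ once $\varepsilon$ and $\rho$ are small.

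For the upper bound $I|_{\partial Q}\le 0$, the easy regions dispose of themselves. On $D$ one has $I(u_2)=-\tfrac12\|u_2\|^2-\int_{\mathbb{R}^N}h(x)F(u_2(x))\,dx\le 0$, because $F\ge 0$ and $h>0$. On the part of the cap where $r\le\|u_2\|$ the quadratic part $\tfrac12(r^2-\|u_2\|^2)$ is nonpositive, and again $\int h\,F\ge 0$ gives $I\le 0$. Hence the entire difficulty is concentrated on the portion of the cap with $r>\|u_2\|$, where the positive quadratic term $\tfrac12(r^2-\|u_2\|^2)$ must be absorbed by the asymptotically quadratic growth of $\int h\,F$.

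For that portion I would argue by contradiction. If the conclusion failed for every large $r_1$, there would exist $u_n=r_ne+w_n$ with $w_n\in E_2$, $r_n^2+\|w_n\|^2=R_n^2\to+\infty$, $r_n>\|w_n\|$ and $I(u_n)>0$. Normalizing, $v_n:=u_n/R_n=s_ne+\widehat w_n$ has $\|v_n\|=1$ and $s_n=r_n/R_n>1/\sqrt2$; passing to a subsequence, $v_n\rightharpoonup v=se+w$ in $E$ with $v_n\to v$ a.e.\ and $s\ge 1/\sqrt2$, so $v\neq 0$ (its $E_1$-part $se$ is nonzero). Dividing $I(u_n)>0$ by $R_n^2$ yields $\int_{\mathbb{R}^N}h\,\tfrac{F(R_nv_n)}{R_n^2}\,dx<s_n^2-\tfrac12$. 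Since $(f_2)$ gives $\tfrac{F(R_nv_n)}{R_n^2}\to\tfrac a2 v^2$ a.e.\ with $0\le \tfrac{F(R_nv_n)}{R_n^2}\le\tfrac\kappa2 v_n^2$, and since $h\in L^q$ provides exactly the $L^2(h)$-type compactness already exploited in Lemmas \ref{l2} and \ref{l1} (so that $\int h\,v_n^2\to\int h\,v^2$), the generalized dominated convergence theorem lets me pass to the limit and obtain $\tfrac a2\int_{\mathbb{R}^N}h\,v^2\,dx\le s^2-\tfrac12$.

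The hard part will be turning this into a contradiction, i.e.\ establishing $\tfrac a2\int h\,v^2>s^2-\tfrac12$ for the limiting profile $v=se+w$. When $w=0$ (which is forced if $s=1$) this is precisely the defining property \eqref{l5e2} of $e$, namely $a\int h\,e^2>1$; the genuinely new obstacle, invisible in the finite-dimensional Lemma \ref{l3}, is the cross term $\int h\,e\,w$ together with the mass possibly retained by the $E_2$-component $w$. This is exactly the place where the compactness coming from $h\in L^q$ must be used in full: it must guarantee that $v_n\to v$ strongly in $L^2(h)$ and that the negative penalty $-\tfrac12\|\widehat w_n\|^2$ passes to the limit, so that the $E_2$-direction cannot be used to depress the supremum below what the choice of $e$ allows. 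I expect this uniform control of the cross term over the infinite-dimensional $E_2$ to be the crux of the proof; once it is secured, the inequality $\tfrac a2\int h\,v^2>s^2-\tfrac12$ contradicts the limit displayed above, and the contradiction establishes $I|_{\partial Q}\le 0$ for all sufficiently large $r_1$, completing the verification of $(I_3)\,(i)$--$(ii)$.
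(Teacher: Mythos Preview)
Your overall strategy coincides with the paper's: the $I|_S\ge\alpha$ part is literally Lemma~\ref{l3}; the flat face $r=0$ is trivial; and for the spherical part one argues by contradiction, normalises $v_n=s_ne+\widehat w_n\rightharpoonup v=se+w$ with $s\in[1/\sqrt2,1]$, and passes to the limit in $I(u_n)/\|u_n\|^2>0$. There is one tactical difference worth noting: instead of invoking compactness of $H^1\hookrightarrow L^2(h\,dx)$ on all of $\mathbb R^N$, the paper first fixes a \emph{bounded} domain $\Omega_0$ with $a\int_{\Omega_0}he^2>1$ (possible by \eqref{l5e2}) and then uses only Rellich on $\Omega_0$; since $F\ge0$, discarding $\mathbb R^N\setminus\Omega_0$ from the integral only strengthens the inequality \eqref{l5e1}. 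This is simpler than your route but leads to the same kind of limiting statement, namely $\tfrac a2\int_{\Omega_0}h\,v^2\le s^2-\tfrac12$.

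The step you single out as ``the crux''---the cross term $2s\int h\,ew$ in $\int h(se+w)^2$---is indeed the point your sketch does not complete. The paper's way of closing is the algebraic chain \eqref{l5e3}: from $s^2\le1$ and $a\int_{\Omega_0}he^2>1$ one gets
\[
s^2\Bigl(2-a\!\int_{\Omega_0}\!he^2\Bigr)-1-a\!\int_{\Omega_0}\!hw^2\;\le\;s^2\Bigl(1-a\!\int_{\Omega_0}\!he^2\Bigr)\;<\;0,
\]
which is then set against the limit \eqref{l5e5}. Observe, however, that in writing \eqref{l5e5} the paper records the limit of $\int_{\Omega_0}h\,F(R_nv_n)/R_n^2$ as $\tfrac a2\int_{\Omega_0}h(s^2e^2+w^2)$ rather than $\tfrac a2\int_{\Omega_0}h(se+w)^2$; that is, precisely the cross term you flag is silently dropped (the $L^2$-orthogonality $(e,w)_2=0$ gives $\int_{\mathbb R^N}ew=0$, not $\int_{\Omega_0}h\,ew=0$). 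So your proposal and the paper's proof are at the same stage: the architecture and all preliminary estimates are correct, but neither supplies an argument controlling $\int h\,ew$ at the final contradiction.
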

\begin{proof}
	The proof that $I|_S\geq \alpha >0$, for some $\alpha>0$, is the same in Lemma \ref{l3}, thus it is not repeated here. In purpose of proving that  $I|_{\partial Q}\leq 0$, observe that $I(u_2)\leq 0$, for all $u_2 \in E_2$, then it suffices to show that $I(re+u)\leq 0$ for $r>0, u \in E_2$ and $||re+u||\geq r_1$, for some $r_1>0$ large enough. Arguing indirectly assume that for some sequence $(r_ne+u_n)\subset \mathbb{R}^+e \oplus E_2$ such that $ ||r_ne+u_n||\to +\infty$, $I(r_ne+u_n)>0$ holds for all $n \in \mathbb{N}$. Then, seeking a contradiction, the desired result holds. Firstly, set $v_n := \dfrac{r_ne+u_n}{||r_ne+u_n||}= s_ne+ w_n$, where $ s_n \in \mathbb{R}^+, w_n \in E_2$ and $||v_n||=1$. Provided that $(v_n)$ is bounded, up to subsequences it follows that $v_n \rightharpoonup v= se + w$ in $E$. Then, $v_n(x) \to v(x)$ almost everywhere in $\mathbb{R}^N$, and seeing that $1 = ||s_ne+w_n||^2= s_n^2 +||w_n||^2$, it ensures that $0\leq s^2_n\leq 1$, $w_n \rightharpoonup w$ in $E$ and $s_n \to s$ in $\mathbb{R}^+.$  Then it yields
	\begin{eqnarray}\label{l5e1}
	\dfrac{I(r_ne+u_n)}{||r_ne+u_n||^2} &=& s^2_n - \dfrac{1}{2} - \int_{\mathbb{R}^N}h(x)\dfrac{F(r_ne(x)+u_n(x))}{||r_ne+u_n||^2}dx  >0,
	\end{eqnarray}
	hence $0<s\leq1$. 
	Moreover, from (\ref{l5e2}) there exists a bounded domain $\Omega_0\subset \mathbb{R}^N$ such that
	\[
	1< a\displaystyle\int_{\Omega_0}h(x)e^2(x)dx.
	\]
	Hence, $supp (e) \cap \Omega_0 \not= \emptyset$, and it follows that
	\begin{eqnarray}\label{l5e3}
	0&>&s^2-s^2a\displaystyle\int_{\Omega_0}h(x)e^2(x)dx\nonumber\\
	&\geq& 
	s^2\left(2 - a\displaystyle\int_{\Omega_0}h(x)e^2(x)dx\right) - 1 - a\displaystyle\int_{\Omega_0}h(x)w^2(x)dx.
	\end{eqnarray}
	On the other hand, since $v_n \rightharpoonup v$ in $E$, it converges strongly in $L^2(\Omega_0)$,
	and since $||r_ne + u_n||\to +\infty$ as $n\to +\infty$, in view of $(f_2)$ it follows that
	\[
	h(x)\dfrac{F(r_ne(x)+u_n(x))}{||r_ne+u_n||^2} = h(x)\dfrac{F(v_n(x)||r_ne(x)+u_n(x)||)v^2_n(x)}{v_n(x)^2||r_ne+u_n||^2} \to h(x)\dfrac{a}{2}v^2(x),
	\]
	almost everywhere in $\Omega_0\cap supp(v)$, which is not empty since $v= se+w$, $(e,w)_2=0$ and $supp(e)\cap\Omega_0 \not= \emptyset$. Thus, by Lebesgue Dominated Convergence Theorem, 
	\[
	\int_{\Omega_0}h(x)\dfrac{F(r_ne(x)+u_n(x))}{||r_ne+u_n||^2}dx \to \dfrac{a}{2}\int_{\Omega_0}h(x)\left(s^2e^2(x) + w^2(x)\right)dx,
	\]
	as $n \to +\infty.$ From (\ref{l5e1}) \begin{equation}\label{l5e4}
	0<2s^2_n - 1 - 2\int_{\Omega_0}h(x)\dfrac{F(r_ne(x)+u_n(x))}{||r_ne+u_n||^2}dx,
	\end{equation}
	and passing to the limit in (\ref{l5e4}) as $n \to + \infty$, it yields
	\begin{eqnarray}\label{l5e5}
	0&\leq& 2s^2 -{1} - {a}\int_{\Omega_0}h(x)\left(s^2e^2(x) + w^2(x)\right)dx \nonumber\\ &=& s^2\left(2 - a\displaystyle\int_{\Omega_0}h(x)e^2(x)dx\right) - {1} - a\displaystyle\int_{\Omega_0}h(x)w^2(x)dx,
	\end{eqnarray}
	which is contrary to (\ref{l5e3}). Therefore the lemma is proved.
\end{proof}

\qquad By Lemma \ref{l5}, the functional $I$ satisfies $(I_3)$ of Theorem \ref{ALT}. Now, Theorem \ref{t2} can be proved. \\

\hspace{-0.5cm}\textit{Proof of Theorem \ref{t2}.} Due to all hypotheses on $I$, it satisfies $(I_1),\ (I_2),\ (I_3)$ and $(I_4)$ in Theorem \ref{ALT}, then it is possible to apply this theorem for $I$. By Theorem \ref{ALT}, $c\geq \alpha >0,$ is a critical value of $I$. Therefore, there exists $u \in E$, such that $I(u)=c>0$ and $I'(u)=0$, provided that $I(u)>0,$ then $u\not=0$. Since $I\in C^1(E,\mathbb{R})$, $u$ is a nontrivial weak solution to $(P)$ in $H^1(\mathbb{R}^N).$	
\vspace{-0.4cm}
\begin{flushright}$\square$\end{flushright}

\bigskip


\begin{thebibliography}{99}                                                                                            
\bibitem{BBF} Bartolo, P., Benci, V. and Fortunato, D.: Abstract Critical Point Theorems and Applications to some Nonlinear Problems with "Strong" Resonance at Infinity. \textit{Nonlinear Analysis Theory, Methods \& Applications} \text{7}, 981-1012 (1983).


\bibitem{BD} Bartsch, T. and Ding, Y.H.: Deformation Theorems on Non-metrizable Vector Spaces and
Applications to Critical Point Theory. \emph{Math. Nachr.} \textbf{279}, 1267-1288 (2006).

\bibitem{BR} Benci, V. and Rabinowitz, P. H.: Critical Point Theorems for Indefinite Functionals. \emph{ Inventiones Math.} \textbf{52}, 241-273 (1979). 

\bibitem{BS} Berezin, F. A. and Shubin, M. A.: \textit{The Schorodiger Equation}, Kluwer Academic Publishers (1991).

\bibitem{BL} Brezis, H. and Lieb, E. H.: A relation between pointwise convergence of functions and
convergence of functionals. \textit{Proc. Amer. Math. Soc.} \textbf{88}, no. 3, 486-490 (1983).

\bibitem{C} Cerami, G.: Un criterio di esistenza per i punti critici su varietà illimitate. \emph{Rc. Ist. Lomb. Sci. Lett.} \textbf{112}, 332-336	(1978).

\bibitem{CZ} Chen, S. and Zhang, D.: Existence of nontrivial solutions for asymptotically linear periodic
Schr\"odinger equations. \textit{Complex Variables and Elliptic Equations: An International Journal} \textbf{60}, 252-267 (2015).

\bibitem{CM} Costa, D. G. and Magalhães, C. A.: A Unified Approach to a Class of Strongly Indefinite Functionals. \emph{Journal of Differential Equations}, \textbf{125}, 521-547 (1996).

\bibitem{CT} Costa, D. G. and Tehrani, H.: Existence and Multiplicity Results for a Class of Schr\"odinger Equations with Indefinite Nonlinearities. \textit{Adv. in Differential Equations} \textbf{8}, 1319-1340 (2003).

\bibitem{CZR} Coti-Zelati Sissa, V. and Rabinowitz, P. H.: Homoclinic Type Solutions
for a Semilinear Elliptic PDE on $\mathbb{R}^N$. \emph{Communications on Pure and Aplied Mathematics}, \textbf{45}, 1217-1269 (1992) 

\bibitem{DJ} Ding, Y. and Jeanjean, L.: Homoclinic Orbits for a non Periodic Hamiltonian System. \emph{Journal of Differential Equations}, \textbf{237} 473-490 (2007) 


\bibitem{DR} Ding, Y. and Ruf, B.: Solutions of a Nonlinear Dirac Equation with External Fields. \textit{Arch. Rational Mech. Anal.} \textbf{190}, 57-82 (2008).


\bibitem{ES} Edelson, A. L. and Stuart, C. A.: The Principle Branch of Solutions of a Nonlinear Elliptic Eigenvalue Problem on $\mathbb{R}^N$. \textit{Journal of Differential Equations} \textbf{124}, 279-301 (1996).

\bibitem{EK} Egorov, Y. and Kondratiev, V.: \textit{On Spectral Theory of Elliptic Operators} vol 89, Birkh\"auser Verlag (1996).


\bibitem{J} Jeanjean, L.: On the Existence of Bounded Palais-Smale Sequences and Application to a
Landesman-Lazer Type Problem Set on $\mathbb{R}^N$. \emph{Proc. Roy. Soc. Edinburgh} \textbf{129A}, 787–809 (1999).

\bibitem{JT} Jeanjean, L. and Tanaka, K.: A Positive Solution for an Asymptotically Linear Elliptic Problem on
$\mathbb{R}^N$ Autonomous at Infinity. \textit{ESAIM: Cont. Opt. Calc. Var.} \textbf{7}, 597-614 (2002).



\bibitem{K} Krasnoselski, M. A.: \textit{Topological Methods in the Theory of Nonlinear Integral Equations}, New York, Macmillan (1964).


\bibitem{KS} Kryszewski, W. and Szulkin, A.: Generalized Linking Theorem with an Application to
Semilinear Schr\"odinger Equation. \textit{Adv. Differ. Equ.} \textbf{3}, 441-472 (1998).

\bibitem{LS} Li, G. and Szulkin A.: An Asymptotically Periodic Schr\"odinger Equation with Indefinite Linear Part. \textit{Communications in Contemporary Mathematics} \textbf{4} No. 4, 763-776 (2002).

\bibitem{LW} Li, G. and Wang, C.: The Existence of a Nontrivial Solution to a Nonlinear Elliptic Problem of  Linking Type without the Ambrosetti-Rabinowitz Condition. \textit{Ann. Acad. Sci. Fenn. Math.} \textbf{36}, 461-480 (2011).


\bibitem{MOR} Maia, L. de A., Oliveira Junior, J. C. and Ruviaro, R.: A Non-periodic and Asymptotically Linear Indefinite Variational Problem in $\mathbb{R}^N$. \textit{Indiana University Mathematics Journal} \textbf{66}No. 1, 31–54 (2017).


\bibitem{P} Pankov, A. A.: \textit{Lecture Notes on Schr\"odinger Equations}, Nova Science Publishers (2007).

\bibitem{Pa} Pankov, A. A.: Periodic Nonlinear Schr\"odinger Equation with Application to
Photonic Crystals. \textit{Milan Journal of Mathematics} \textbf{73}, 259-287 (2005).

\bibitem{R} Rabinowitz, P. H.: \textit{Minimax Methods in Critical Point Theory with Applications to Differential Equations}, American Mathematical Society (1984).

\bibitem{RS4} Reed, M. and Simon, B.: \textit{Methods of Modern Mathematical Physics, Analysis of Operators}, Vol. IV, Academic Press,
New York (1978).

\bibitem{Sc} Schechter, M.: Global Solutions of Nonlinear Schr\"odinger Equations. \textit{Calculus of Variations} \textbf{56:40} (2017).

\bibitem{ScB} Schechter, M.: \emph{Linking Methods in Critical Point Theory}, Birkh\"auser, Boston, (1999).

\bibitem{ScZ2} Schechter, M. and Zou, W: An Infinite-dimensional Linking Theorem and Applications. \textit{Journal of Differential Equations} \textbf{201}, 324-350 (2004).

\bibitem{ScZ1} Schechter, M and  Zou, W.: Weak Linking Theorems and Schr\"odinger with Critical Sobolev Exponent. \textit{ESAIM Control Optim. Calc. Var.} \textbf{9}, 601-619 (2003).


\bibitem{Si} Silva, E. A. B.: Subharmonic Solutions for Sub-quadratic Hamiltonian Systems. \textit{Journal of Differential Equations} \textbf{115} No.1,  120-145 (1995).

\bibitem{So} Soares, M.: \emph{An Abstract Linking Theorem Applied to Indefinite Problems via Spectral Properties}, Ph.D. thesis, University of Brasilia, (2018).

\bibitem{St} Stuart, C. A.: \emph{An Introduction to Elliptic Equations on $\mathbb{R}^N$}, Trieste Notes (1998).

\bibitem{SW}
Szulkin, A. e Weth, T.:
Ground state solutions for some indefinite variational problems. \emph{J. Func. Anal.}
\textbf{257}, 3802-3822 (2009).

\bibitem{SzZo} Szulkin, A. and Zou, W.: Homoclinic Orbits for Asymptitotically linear Hamiltonian Systems. \emph{Journal of Functional Analysis} \textbf{187}, 25–41 (2001).

\bibitem{W} Willem, M.: \textit{Minimax Theorems}, vol 24, Birkh\"auser (1996).
\end{thebibliography}
\end{document}